\algrenewcommand{\algorithmicrequire}{\textbf{Input:}}
\algrenewcommand{\algorithmicensure}{\textbf{Output:}}
\newcommand{\tn}{\textnormal}
\newtheorem{definition}{Definition}
\newtheorem{theorem}{Theorem}
\newtheorem{lemma}[theorem]{Lemma}
\newtheorem{proposition}[theorem]{Proposition}
\newtheorem{corollary}[theorem]{Corollary}
\begin{document}
%


\author{Amit Kumar Srivastava\affiliationmark{1}}
\title[Formatting an article for DMTCS]{Ins-Robust Primitive Words}

\affiliation{
  Department of CSE, IIT Guwahati, India}
\keywords{Combinatorics on Words, Primitive Word, Ins-robust Primitive Word, Context-Free Language, Formal Languages}



\date{Received: date / Accepted: date}
\maketitle

\begin{abstract}
Let $Q$ be the set of primitive words over a finite alphabet with at least two symbols. We characterize a class of primitive words, $Q_I$, referred to as ins-robust primitive words, which remain primitive on insertion of any letter from the alphabet and present some properties that characterizes words in the set $Q_I$. It is shown that the language $Q_I$ is dense. We prove that the language of primitive words that are not ins-robust is not context-free. We also present a linear time
algorithm to recognize ins-robust primitive words and give a lower bound on the number of $n$-length ins-robust primitive words.
\end{abstract}


\section{Introduction}\label{sec:bterm}
A word is a sequence of finite symbols or letters from a finite alphabet. Combinatorics on words is the study of mathematical and computational problems related to words. A word is said to be primitive if it is not a proper power of a shorter word \cite{lothaire1997combinatorics}. The notion of primitive words plays a central role in the field of combinatorics on words and algebraic coding theory.
Apart from this, primitive words have also received much attention in the area of formal langauges.
This paper presents a characterization of the set of a special type of primitive words which remain robust under an operation of insertion of a single letter from the underlying alphabet.

The relation between the language of primitive words and other formal languages has been
thoroughly explored \cite{lischke2011primitive,domosi1993formal,petersen1994ambiguity}.
In 1991, D\"{o}m\"{o}si et al. raised the problem whether the language of primitive words, $Q$, is context-free? This long standing open problem has been one of the main investigations about primitive words \cite{doi}. A linear time algorithm to test whether a given word is primitive is given in \cite{duval2004linear}.


The robustness of the language of primitive words is concerned with the preservation of primitivity of a primitive word with respect to the various point mutation
operations such as insertion, deletion or substitution of a symbol and homomorphism \cite{puaun2002robustness, srivastava2016robust}. A primitive word, $u$, is said to be ins-robust if a
word remains primitive on insertion of any letter in $u$ \cite{puaun2002robustness}. In this paper, we investigate the
language of ins-robust primitive words. In particular, our contributions are as
follows.
\begin{enumerate}[(a)]
\item We characterize ins-robust primitive words and identify several properties. 
\item We show that the set of ins-robust primitive words is dense. 
\item We prove that the language of ins-robust primitive words is not regular. 
\item We show that the language of primitive words that are not ins-robust, is not context-free. 
\item We give a lower bound on the number of ins-robust primitive words of a given
length.
\item We give a linear time algorithm to test if a word is ins-robust primitive.
\end{enumerate}

The paper is organized as follows. The next section reviews the
basic concepts on words and some existing results on robustness of primitive words.
In Section \ref{sec:ins} we characterize ins-robust primitive words and study
several properties of the language of ins-robust primitive words. In Section \ref{sec:den} we show that the set of ins-robust primitive words is dense. In Section \ref{sec:for} we give proof for the claim that language of ins-robust primitive words is not regular and the language of primitive words which is not ins-robust is not context-free.
In Section \ref{sec:cou} we give a lower bound on the number of ins-robust primitive words of
a given length.
In Section \ref{sec:alg} we give a linear time algorithm to recognize a
ins-robust primitive word.
Finally, conclusions and some open problems are presented in
Section \ref{sec:con}.



\section{Preliminaries}\label{sec:pre}
Let $V$ be a finite alphabet with at least two distinct elements. We call the elements of $V$ as letters or symbols. The set $V^*$ is the collection of all words (strings) generated by the letters from the alphabet $V$. The $concatenation$ of two words $u$ and $v$ is represented as $u.v$ or simply $uv$. The length of a word $w$ is denoted as $|w|$.
We use notation $|u|_a$  for the number of occurrences of symbol $a \in V$ in a word $u$.
The set $V^n$ is the set of all words having length $n$ over the alphabet $V$. We define $V^{*} = \bigcup_{n \in N} V^n$ and $V^{+} = V^{*}\setminus \{\lambda\}$, where $V^{0} = \{\lambda\}$ and $\lambda$ is the empty string, that is, $|\lambda| = 0$.
The set $V^*$ is the free monoid and $V^+ = V^* \setminus \{\lambda\}$ is the free semigroup under the concatenation operation.

Any set $L \subseteq V^{*}$ is a language over the alphabet $V$. A language $L$ over $V$ is
said to {\it reflective} if $uv \in L$ implies $vu \in L$ where $u,v \in V^*$. The word $u$ ($v$) is called {\it prefix} ({\it suffix}) of a word $w$ if $w$ can be written as $uv$. A prefix (suffix) with length $k$ of a word $u$ is denoted by $\tn{\it pref}(u,k)$
($\tn{suff}(u,k)$, respectively) where $k \in \{0, 1, \ldots, |u|\}$. For $k=0$, $\tn{\it pref}(u,0) = \tn{\it suff}(u, 0) = \lambda$. We use notation $\tn{\it pref}(u)$ to specify the set of all non-empty prefixes
of a word $u$. If $w$ can be written as $xyz$, where $y \in V^{+}$ and $x,z \in V^{*}$, then the word $y$ is called as a \textit{factor} of the word $w$. Let $w=a_1 \ldots a_n$ be a word, where $a_i \in V$ for $i \in
\{1, \ldots, n\}$.
 The reverse of the word $w$ is $rev(w) = a_n a_{n-1} \ldots a_2 a_1$.  The factor $a_i a_{i+1} \ldots a_j$ of $w$ is denoted by $w[i..j]$, where $i \leq j$. The cardinality of a set $X$ is denoted by $|X|$. The language of nonprimitive words is denoted as $Z$.
For  elementary notions and results in formal languages theory, we refer to \cite{hopcroft1979introduction}.

A well studied theme on words is properties about periodicity and primitivity \cite{lischke2011primitive,shyr1994non,petersen1994ambiguity,domosi1993formal,puaun2002robustness}
and associated counting and sampling problems \cite{lijun2001count}. A word $w = a_1 a_2 \ldots a_n$ is said to be periodic if $a_i = a_{i+p}$ for some $p > 0$ and $1 \le i \le n-p$. Such a number $p$ is called period of the word $w$. The ratio $ e = \frac{|w|}{ p} $ is called as the exponent of the word $w$. A word $w$ is said to be a $repetition$ if and only if $e \ge 2$. A $maximal \ repetition$ in a word is a factor which is a repetition such that its extension by one letter to the right or to the left yields a word with a larger period.  Equivalently, a factor $r = w[i..j]$ is a maximal repetition of a word $w$ if no factor of $w$ that contains $r$ as a proper factor has the same period as $r$ \cite{kolpakov1999finding,kolpakov1999maximal}. For example, the factor $ababa$ in the word $w=abaababaabaab$ is a maximal
repetition with period 2, while the factor $abab$ is not a maximal repetition.

%
 A word, $w$, is said to be primitive if $w$ cannot be expressed as a non-trivial power of another word. In other words, if $w = v^n$ for some word $v$ and $w$ is primitive then $n=1$ and $w=v$. As the definition is meaningful only when the underlying alphabet has at least two letters, we assume throughout that $V$ is a non-trivial alphabet with at least two distinct symbols. Primitive words have been extensively studied in the literature, see for example \cite{lischke2011primitive,puaun2002robustness,shyr1994non,petersen1994ambiguity,domosi1993formal}.

\section{Characterization of Ins-Robust Primitive Words}\label{sec:ins}
As mentioned in the previous section the set of all words over an alphabet can be partitioned in two sets, those which can not be written as power of any other smaller word, known as primitive words and the other remaining are non-primitive words. We can further partition the set of primitive words in different classes. In this section we study a class of primitive words that remains primitive on insertion of any arbitrary symbol from the alphabet. This special class is known as ins-robust primitive words. A formal definition is as follows.

\begin{definition}[Ins-Robust Primitive Word]
A primitive word $w$ of length $n$ is said to be ins-robust primitive word if the word
$$\tn{pref}(w, i)\ . \ a \ .\ \tn{suff}(w, n - i)$$ is a primitive word for all $i \in \{0, 1, \ldots, n \}$ where $a\in V$.
\end{definition}

There are infinitely many primitive words which are ins-robust. For example, the words $abba$ and $a^m b^n$ for $m,n \ge 2$ are ins-robust primitive words.
We denote the set of all ins-robust primitive words over an alphabet $V$
by $Q_I$. Clearly the language of ins-robust primitive words is a subset of the set
of primitive words, that is, $ Q_I \subset Q$.
The following results ensure that if a given word $w$ is not primitive then a word obtained after inserting a symbol from $V$ in $w$ will be primitive.
\begin{lemma}[\!\cite{puaun2002robustness}]\label{lem:ext}
If $u \in V^+$, $u \neq a^n$ for any $a \in V$, $n \geq 1$ then at least one
word among the words $u$ and $ua$ is primitive.
\end{lemma}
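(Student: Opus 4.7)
The plan is to argue by contradiction. Suppose that for some $a \in V$, both $u$ and $ua$ are non-primitive. Then we can write $u = v^{k}$ and $ua = w^{m}$ for some words $v, w$ and integers $k, m \geq 2$. Setting $p = |v|$ and $q = |w|$, the word $u$ has period $p$ by construction, and, being a prefix of the $q$-periodic word $ua$, it also has period $q$.

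The next step is to apply the Fine--Wilf theorem, which states that a word of length at least $p + q - \gcd(p, q)$ carrying periods $p$ and $q$ must have period $\gcd(p, q)$. The length hypothesis holds here: since $k, m \geq 2$, we have $p \leq |u|/2$ and $q \leq (|u|+1)/2$, so $p + q \leq |u| + 1/2$, which sharpens to the integer inequality $p + q \leq |u|$. Consequently $u$ has period $d := \gcd(p, q)$, i.e.\ $u = t^{|u|/d}$ for some word $t$ of length $d$. Because $u$ is not a power of a single letter by hypothesis, $d \geq 2$.

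Finally, I would extract a numerical contradiction via divisibility. From $|u| = kp$ we get $d \mid |u|$, and from $|u| + 1 = mq$ we get $d \mid |u| + 1$. Subtracting yields $d \mid 1$, which forces $d = 1$ and contradicts $d \geq 2$. Hence at least one of $u$ and $ua$ must be primitive.

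The main obstacle I anticipate is verifying the Fine--Wilf length hypothesis cleanly across all cases, in particular the corner $k = m = 2$ where the bound on $p + q$ is nearly tight and relies on the integrality step; once that is in hand, the rest reduces to a one-line divisibility argument.
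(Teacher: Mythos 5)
The paper does not prove this lemma at all --- it is imported verbatim from the cited reference --- so there is no internal proof to compare against; your argument stands on its own and is correct, and it is the standard periodicity proof of this fact. The key points all check out: $k,m\ge 2$ gives $p\le |u|/2$ and $q\le(|u|+1)/2$, hence $p+q\le |u|$ by integrality, so Fine--Wilf applies and yields the common period $d=\gcd(p,q)$; then $d\mid |u|$ (via $d\mid p\mid |u|$) and $d\mid |u|+1$ (via $d\mid q\mid |u|+1$) force $d=1$, contradicting $u\ne a^n$, and the corner case $k=m=2$ that you flag is already absorbed by the integrality step.
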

\begin{lemma}[\!\cite{puaun2002robustness}]\label{cor:prins}
If $u_1, u_2 \in V^+$, $u_1 u_2 \neq a^n$, for any $a \in V$, $n \geq 1$ then at least one
of the words among $u_1 u_2$, $u_1 a u_2$ is primitive.
\end{lemma}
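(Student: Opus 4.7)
The plan is to reduce Lemma \ref{cor:prins} to Lemma \ref{lem:ext} via the conjugacy invariance of primitivity: two words that are cyclic rotations of each other are simultaneously primitive or simultaneously non-primitive, and in particular every cyclic rotation of a single-letter power $b^n$ is again $b^n$. Since $u_2 u_1$ is a conjugate of $u_1 u_2$, the hypothesis $u_1 u_2 \neq b^n$ transfers to $u_2 u_1 \neq b^n$ for every $b \in V$ and $n \geq 1$. Applying Lemma \ref{lem:ext} to the word $u_2 u_1$ and the letter $a$, I conclude that at least one of $u_2 u_1$ or $u_2 u_1 a$ is primitive.

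It then remains to translate both alternatives back to statements about $u_1 u_2$ and $u_1 a u_2$. The first is immediate, since $u_1 u_2$ and $u_2 u_1$ are conjugates. For the second, the key syntactic observation is that $u_2 u_1 a$ and $u_1 a u_2$ are also conjugates: rotating the length-$|u_2|$ prefix $u_2$ of the word $u_2 \cdot u_1 \cdot a$ to the back yields precisely $u_1 \cdot a \cdot u_2$. Hence $u_2 u_1 a$ is primitive if and only if $u_1 a u_2$ is primitive, and combining the two equivalences gives that at least one of $u_1 u_2$, $u_1 a u_2$ is primitive.

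The only step requiring care is the syntactic verification of the second conjugacy, which is a direct check once the rotation by $|u_2|$ positions is identified. Beyond this, no further combinatorial machinery (such as Fine--Wilf or critical-factorization arguments) is needed, since the essential difficulty of extending a word by a single letter is already encapsulated in Lemma \ref{lem:ext}.
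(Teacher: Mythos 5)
Your proof is correct. Note that the paper itself states Lemma \ref{cor:prins} only as a citation to the robustness paper of P\u{a}un et al.\ and gives no proof of its own, so there is nothing internal to compare against; but your reduction to Lemma \ref{lem:ext} is exactly the standard derivation. The two conjugacy steps are both sound: $u_2u_1$ is a rotation of $u_1u_2$, so the hypothesis $u_1u_2\neq b^n$ transfers and primitivity transfers back (this is the reflectivity of $Q$ and $Z$, Lemma \ref{lem:refl}), and rotating the prefix $u_2$ of $u_2u_1a$ to the back indeed yields $u_1au_2$, so the second alternative also transfers. No gap.
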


The above results help us to completely characterize the primitive words which are not ins-robust.
Next we give a structural characterization of the words
that are in the set $Q$ but not in the set $Q_I$.

\begin{theorem} \label{thm:inschar}
A primitive word $w$ is not ins-robust if and only if $w$ can be expressed in the form of
$u^{r} u_1 u_2 u^{s}$ where $u = u_1 c u_2\in Q$, $u_1$, $u_2 \in V^*$, for some $c \in
V$, $r$, $s \ge 0$ and $r + s \ge 1$.
\end{theorem}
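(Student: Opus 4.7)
The plan is to prove both directions by an explicit structural calculation that exploits how an insertion at one specific position collapses the claimed form into a pure power.

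For the easy (``if'') direction, I would take a word of the stated shape $w = u^r u_1 u_2 u^s$ with $u = u_1 c u_2 \in Q$ and $r+s \ge 1$, and insert the letter $c$ at position $r|u| + |u_1|$, i.e., in the gap between $u_1$ and $u_2$ in the unique middle block. The resulting word is $u^r (u_1 c u_2) u^s = u^{r+s+1}$, whose exponent is at least $2$; hence it is not primitive, and $w$ is not ins-robust.

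For the harder (``only if'') direction, suppose $w \in Q$ is not ins-robust. Then by definition there exist a position $i \in \{0, 1, \ldots, n\}$ and a letter $a \in V$ for which $w' = \tn{pref}(w, i) \cdot a \cdot \tn{suff}(w, n-i)$ fails to be primitive. Let $v$ be the primitive root of $w'$, so $w' = v^k$ with $v \in Q$ and $k \ge 2$. The key step is to locate the inserted letter inside this power: let $j \in \{1, \ldots, k\}$ be the index of the copy of $v$ that contains position $i+1$ (the newly inserted symbol), and let $p \in \{1, \ldots, |v|\}$ be its offset within that copy. Writing $v = v_1 c v_2$ with $|v_1| = p - 1$ forces the letter $c$ to coincide with $a$, and one obtains $w' = v^{j-1}(v_1 c v_2) v^{k-j}$. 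Deleting the inserted occurrence of $c$ recovers $w = v^{j-1} v_1 v_2 v^{k-j}$, so choosing $u = v$, $u_1 = v_1$, $u_2 = v_2$, $r = j-1$, $s = k-j$ produces the desired decomposition, with $r + s = k - 1 \ge 1$ and $u \in Q$ by construction.

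The only mild subtlety I foresee is the boundary-case bookkeeping when $i+1$ is an exact multiple of $|v|$, where the inserted position sits on the seam between two consecutive copies of $v$; I would handle this by fixing a uniform convention (e.g., always assigning such a position to the latter copy) and noting that the formulas for $r$ and $s$ remain well-defined. Aside from this small piece of case analysis, the argument reduces to a single-line factorization in each direction, so I do not anticipate any deeper obstacle: the content of the theorem is essentially the observation that the only way a single insertion can destroy primitivity is to complete the word into a higher power of its would-be root.
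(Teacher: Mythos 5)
Your proposal is correct and follows essentially the same route as the paper: in the ``if'' direction you insert $c$ between $u_1$ and $u_2$ to produce the power $u^{r+s+1}$, and in the ``only if'' direction you take the non-primitive insertion result $w'=v^k$ with $v$ its primitive root, locate the inserted letter inside one copy of $v$ to write $v=v_1cv_2$, and delete it to get $w=v^{j-1}v_1v_2v^{k-j}$. Your version merely makes the index bookkeeping (and the seam convention) more explicit than the paper's terse statement that $w_1=p^rp_1$ and $w_2=p_2p^s$ with $p_1cp_2=p$.
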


\begin{proof}
We prove the sufficient and necessary conditions below.
\begin{itemize}
\item[$(\Leftarrow)$] This part is straightforward. Let us consider a word $w = u^{k_1} u_1 u_2 u^{k_2}$ where $u_1 c u_2 = u$ for some $c \in V$.
The word $w$ is primitive by Lemma \ref{cor:prins}.
Now insertion of the letter $c$ in $w$ (between $u_1$ and $u_2$) gives the exact power of $u$ which become a non-primitive word. Hence, $w$ is not an ins-robust primitive word.\\
\item[$(\Rightarrow)$] Let $w$ be a primitive word but not ins-robust. Then there exists a
decomposition $w=w_1  w_2$ such that $w_1 c w_2$ is not a primitive word for some letter $c \in V$.
That is, $w_1 c w_2 = p^n $ for some $p \in Q$ and $n \ge 2$.
Therefore  $w_1  = p^r p_1 $ and  $ w_2 = p_2 p^s  $ for $r,s \ge 0 \ and \ r+s \ge 1$ such that $ p_1 c p_2 = p$.
Hence $w= p^r p_1  p_2 p^s$.
\end{itemize} 
\end{proof}

\begin{definition}[Non-Ins-Robust Primitive Words]
A primitive word $w$ is said to be non-ins-robust if $w \in Q$ but $w \notin Q_I$. We denote the set of all non-ins-robust primitive words as $Q_{\overline{I}}$. So, $Q_{\overline{I}} = Q \setminus Q_I$, where `$\setminus$' is the set difference operator.
\end{definition}

The next theorem is about an equation in words and identifies a sufficient condition under which three words are power of a common word.
\begin{theorem}[\cite{lischke2011primitive}]\label{thm:equ}
If $u^m v^n = w^k \ne \lambda$ for words $u$, $v$, $w \in V^{*}$ and natural
numbers $m$, $n$, $k \ge 2$, then $u$, $v$ and $w$ are powers of a common word.
\end{theorem}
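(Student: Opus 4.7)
The statement is the classical Lyndon--Schützenberger equation. The plan is to exhibit a single word $\rho$ of which all of $u$, $v$, $w$ are powers, and the natural candidate is the primitive root of $w$. Writing $w = \rho^t$ with $\rho$ primitive, the hypothesis becomes $u^m v^n = \rho^{tk}$, and it suffices to show that both $u$ and $v$ lie in $\rho^*$, since then all three words are powers of $\rho$.

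The main tool I would use is the Fine and Wilf periodicity theorem: a word of length at least $p + q - \gcd(p,q)$ that admits both $p$ and $q$ as periods also admits $\gcd(p,q)$ as a period. Observe that $u^m$, being a prefix of the purely periodic word $\rho^{tk}$, has period $|\rho|$ whenever $|u^m| \ge |\rho|$; it also trivially has intrinsic period $|u|$. Under the length condition $(m-1)|u| \ge |\rho|$, Fine--Wilf then forces $u^m$ to have period $d := \gcd(|u|, |\rho|)$. But the initial segment of $u^m$ of length $|\rho|$ is exactly $\rho$, so $\rho$ inherits this period $d \le |\rho|$; primitivity of $\rho$ then forces $d = |\rho|$, so $|\rho|$ divides $|u|$ and $u \in \rho^*$. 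The symmetric argument applied to the suffix $v^n$ (via reversal of the whole equation) gives $v \in \rho^*$, and the proof concludes.

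The principal obstacle is securing the Fine--Wilf length hypothesis $(m-1)|u| \ge |\rho|$. This is automatic as soon as $|u| \ge |\rho|$, but it is delicate in the corner case where $u$ is strictly shorter than the primitive root of $w$ (and analogously for $v$). In that situation I would argue by induction on $|w|$: comparing the first $|\rho|$ letters of both sides of $u^m v^n = \rho^{tk}$ exposes a structural overlap between $\rho$ and a short power of $u$, from which a shorter equation of the same Lyndon--Schützenberger type can be extracted (for instance by peeling off a copy of $\rho$ from both ends, or by passing to a suitable conjugate). The parameter $|w|$ strictly decreases at each step, and the base case $|w| = 1$ is immediate since then $\rho^{tk}$ is a single repeated letter and $u$, $v$ are forced to be powers of that letter.
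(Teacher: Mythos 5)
This theorem is quoted by the paper from the literature (it is the classical Lyndon--Sch\"utzenberger theorem) and no proof is given in the paper, so there is no in-text argument to compare yours against; I can only assess your proposal on its own terms. Your first half is correct and standard: reducing to the primitive root $\rho$ of $w$, observing that $u^m$ is a prefix of $\rho^{tk}$ and hence carries the period $|\rho|$ as well as $|u|$, and invoking Fine--Wilf together with the primitivity of $\rho$ to conclude $u \in \rho^*$ whenever $(m-1)|u| \ge |\rho|$, is exactly the right move, and the symmetric argument for $v$ via reversal is fine. Note also that once \emph{one} of $u$, $v$ is shown to lie in $\rho^*$, the other follows for free: if $v^n \in \rho^*$ then $u^m$ is a prefix of $\rho^{tk}$ whose length is a multiple of $|\rho|$, so $u^m$ is a power of $\rho$ and hence $u$ is, by uniqueness of primitive roots. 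So the only genuinely open situation is when the Fine--Wilf length bound fails on \emph{both} sides, which forces $m|u| < 2|\rho|$ and $n|v| < 2|\rho|$, hence $tk \le 3$, i.e.\ $w$ is itself primitive and $k \in \{2,3\}$ with $|u|, |v| < |w|$.

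That residual case is the entire content of the Lyndon--Sch\"utzenberger theorem, and your sketch of it does not work as described. ``Peeling off a copy of $\rho$ from both ends'' of $u^m v^n = \rho^{tk}$ does not produce a shorter equation of the same shape: the prefix $\rho$ does not align with a whole number of $u$-blocks, so what remains on the left is not of the form $u'^{m'} v'^{n'}$ with exponents at least $2$, and the induction on $|w|$ never closes. Likewise ``passing to a suitable conjugate'' destroys the block structure of $u^m v^n$ rather than preserving it. The standard resolutions of the case $u^m v^n = w^k$ with $w$ primitive, $k \in \{2,3\}$ and $|u|,|v| < |w|$ use a different tool altogether --- typically the synchronization (non-overlap) property of primitive words, namely that a primitive word $p$ cannot occur as an internal factor of $pp$, applied to the occurrence of $w$ straddling the boundary between the $u$-part and the $v$-part --- and this is precisely the idea missing from your proposal. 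As written, the proof is incomplete at its crux.
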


The following lemma is a consequence of the Theorem \ref{thm:equ} which states
that a word obtained by concatenating powers of two distinct primitive
words is also primitive.
\begin{lemma}[\cite{lischke2011primitive}] \label{lem:piqj}
If $p,q \in Q$ with $p \neq q$ then $p^i q^j \in Q$ for all $i$, $j \ge 2$.
\end{lemma}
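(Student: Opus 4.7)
The plan is to argue by contradiction and invoke Theorem~\ref{thm:equ} directly, since the hypotheses $i,j \ge 2$ are exactly what that theorem requires. First I would suppose that $p^i q^j$ fails to be primitive, so that $p^i q^j = w^k$ for some word $w \in V^{+}$ and some integer $k \ge 2$.

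Next I would apply Theorem~\ref{thm:equ} with $u = p$, $v = q$, $m = i$, $n = j$: all three exponents $i,j,k$ meet the threshold $\ge 2$, and the product is nonempty, so the theorem yields a common word $t \in V^{+}$ together with nonnegative integers $a$ and $b$ such that $p = t^{a}$ and $q = t^{b}$.

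The final step is to use primitivity. Since $p \in Q$ and $p = t^{a}$, primitivity forces $a = 1$, hence $p = t$; similarly $q = t$. This gives $p = q$, contradicting the hypothesis $p \ne q$. Therefore $p^i q^j \in Q$.

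I do not expect a real obstacle here, as the lemma is essentially a direct corollary of Theorem~\ref{thm:equ}; the only point that needs minor care is checking that the exponent $k$ on the right-hand side is indeed $\ge 2$, which is exactly the hypothesis one invokes when assuming non-primitivity, and that $p,q$ being primitive themselves rules out any nontrivial common root.
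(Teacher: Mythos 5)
Your proof is correct and follows exactly the route the paper intends: the lemma is stated there as a direct consequence of Theorem~\ref{thm:equ} (the paper cites it without writing out the details), and your contradiction argument --- writing $p^i q^j = w^k$ with $k \ge 2$, invoking the theorem to get a common root $t$, and using primitivity of $p$ and $q$ to force $p = t = q$ --- is the standard and complete way to fill that in.
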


\begin{proposition}
If $u ,v \in Q$, $u^m = u_1  u_2 $ and $v = u_1 c u_2$ for some $c \in V$ then $u^m v^n \in Q_{\overline{I}}$ for $m, n \ge 2$.
\end{proposition}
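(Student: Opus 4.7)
The plan is to verify the two membership conditions for $Q_{\overline{I}}$ separately for the word $w := u^m v^n$: first that $w \in Q$, and then that $w$ is not ins-robust.

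For primitivity, I would begin by observing that $u \neq v$. Indeed, since $v = u_1 c u_2$ and $u_1 u_2 = u^m$, we have $|v| = m|u| + 1$, which cannot equal $|u|$ once $m \geq 2$. Hence $u$ and $v$ are two distinct primitive words, and Lemma \ref{lem:piqj} applied with $i = m$ and $j = n$ yields $u^m v^n \in Q$ immediately.

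For the failure of ins-robustness, the cleanest route is to exhibit a single letter insertion that destroys primitivity. Writing $w = u^m v^n = u_1 u_2 v^n$, I would insert the letter $c$ at position $|u_1|$, producing $u_1 c u_2 v^n = v \cdot v^n = v^{n+1}$. Since $n+1 \geq 3$, this is a proper power of $v$ and hence not primitive, which shows directly that $w \notin Q_I$. Equivalently, one can note that $w = v^0 \cdot u_1 \cdot u_2 \cdot v^n$ already fits the normal form of Theorem \ref{thm:inschar} (taking the role of the base word $u$ there as our $v$, with exponents $r = 0$ and $s = n$, and $v = u_1 c u_2$), so the theorem immediately classifies $w$ as non-ins-robust.

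There is no real obstacle here: the proposition is essentially a direct instantiation of Lemma \ref{lem:piqj} combined with Theorem \ref{thm:inschar}. The only subtlety worth flagging in the write-up is the length argument $|v| \neq |u|$ needed to guarantee $u \neq v$, so that Lemma \ref{lem:piqj} is applicable; everything else is a single-line calculation.
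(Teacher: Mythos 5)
Your proposal is correct and follows essentially the same route as the paper: Lemma \ref{lem:piqj} gives primitivity of $u^m v^n$, and inserting $c$ between $u_1$ and $u_2$ produces $v^{n+1} \notin Q$, so the word is not ins-robust. The one point where you are more careful than the paper is in explicitly checking $u \neq v$ (via $|v| = m|u|+1 \neq |u|$) before invoking Lemma \ref{lem:piqj}, a hypothesis the paper's proof silently assumes.
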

\begin{proof}
From Lemma \ref{cor:prins} we know that at least one of $u_1 u_2$ and $u_1 c u_2$ is primitive. Since $u^m = u_1  u_2$ for $m \ge 2$ and $v=u_1 c u_2$, therefore $v$ is primitive and so is $u^m v^n = u_1  u_2(u_1 c u_2)^n$. After insertion of the letter $c$ we will get $(u_1 c u_2)^{n+1}$
which is not a primitive word. However, by Lemma \ref{lem:piqj}, $u^m v^n$ is a primitive word for $m,n \ge 2$. Hence it is not a ins-robust word, that is, $u^m v^n \in Q_{\overline{I}}$. 
\end{proof}

An existing result shows that if a word $w$ is primitive then $rev(w)$ is also primitive. We prove the similar notion for ins-robust primitive word.
\begin{lemma}
If $w \in Q_I$ then $rev(w) \in Q_I$.
\end{lemma}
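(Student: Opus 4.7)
The plan is to argue by contrapositive, reducing any ``bad'' insertion in $rev(w)$ to a bad insertion in $w$ via the identity $rev(xy) = rev(y)\,rev(x)$.

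First I would record the standard fact, already cited in the paper, that $w \in Q$ if and only if $rev(w) \in Q$: if $rev(w) = v^k$ for some $k \ge 2$, then $w = rev(rev(w)) = rev(v)^k$, contradicting $w \in Q \supseteq Q_I$. So $rev(w)$ is at least primitive.

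Next, suppose for contradiction that $rev(w) \notin Q_I$. Then there exist $i \in \{0,1,\ldots,n\}$ and $a \in V$ such that
\[
\tn{pref}(rev(w), i)\cdot a \cdot \tn{suff}(rev(w), n-i) \;=\; v^k
\]
for some word $v$ and some $k \ge 2$. The key observation is that prefixes of $rev(w)$ are reversed suffixes of $w$, and vice versa: $\tn{pref}(rev(w), i) = rev(\tn{suff}(w, i))$ and $\tn{suff}(rev(w), n-i) = rev(\tn{pref}(w, n-i))$. Applying $rev$ to both sides of the displayed equation and using $rev(xy)=rev(y)\,rev(x)$ together with $rev(v^k)=rev(v)^k$, I obtain
\[
\tn{pref}(w, n-i)\cdot a \cdot \tn{suff}(w, i) \;=\; rev(v)^k,
\]
which is non-primitive since $k \ge 2$. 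But this exhibits an insertion of the letter $a$ at position $n-i$ in $w$ that destroys primitivity, contradicting $w \in Q_I$. Hence $rev(w) \in Q_I$.

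The argument is essentially bookkeeping, so I do not anticipate a genuine obstacle; the only thing to be careful about is the indexing, specifically that inserting at position $i$ in $rev(w)$ corresponds under reversal to inserting at position $n-i$ in $w$, and that this correspondence covers all of $\{0,1,\ldots,n\}$, so the equivalence in both directions is total.
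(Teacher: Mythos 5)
Your proof is correct. The only point of difference from the paper is the route: the paper first invokes its structural characterization of non-ins-robust primitive words (Theorem \ref{thm:inschar}), writing $rev(w) = p^r p_1 p_2 p^s$ with $p = p_1 c p_2 \in Q$, then reverses that decomposition term by term to exhibit $w = (rev(p))^s\, rev(p_2)\, rev(p_1)\, (rev(p))^r$ with $rev(p) = rev(p_2)\, c\, rev(p_1)$, and concludes via Lemma \ref{cor:prins}. You instead work directly from the definition of $Q_I$: a failed insertion in $rev(w)$ at position $i$ producing $v^k$ reverses to a failed insertion in $w$ at position $n-i$ producing $rev(v)^k$. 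Both arguments are reversal-of-the-witness contradictions; yours is slightly more self-contained since it needs only the anti-homomorphism property of $rev$ and the fact that reversal preserves primitivity, while the paper's version piggybacks on the characterization theorem it has already established. Your care with the prefix/suffix index correspondence (that $i \mapsto n-i$ is a bijection on $\{0,1,\ldots,n\}$) is exactly the bookkeeping the paper leaves implicit.
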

\begin{proof}[By contradiction] Assume that for a word $w \in Q_I$, $rev(w)$ is not a ins-robust primitive word. i.e. $rev(w) = p^r p_1  p_2 p^s$ where $p = p_1 c p_2 \in Q$ for some $c \in V$.
Then the word $w = rev(rev(w)) =$ $rev(p^r p_1  p_2 p^s)$ =   $(rev(p))^s \ rev(p_2)$ $ \ rev(p_1) \ (rev(p))^r$ and $p= p_1 c p_2$, $rev(p) = rev(p_2) ~ c ~ rev(p_1)$. By Lemma \ref{cor:prins}, $w$ is not a ins-robust primitive word, which is a contradiction.
Therefore, if $w \in Q_I$ then $rev(w) \in Q_I$.
\end{proof}

We know that the language of primitive words $Q$ and the language of
non-primitive words $Z$ over an alphabet $V$ are reflective. Similarly, we have the property of reflectivity for the language of ins-robust primitive words $Q_I$.

\begin{lemma}[\! \cite{puaun2002robustness}] \label{lem:refl}
The languages $Q$ and $Z$ are reflective.
\end{lemma}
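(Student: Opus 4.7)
The plan is to establish reflectivity of $Z$ directly, then deduce reflectivity of $Q$ by complementation (since $Q$ and $Z$ partition $V^+$, a word lies in $Q$ iff its every conjugate lies in $Q$). The whole argument hinges on the classical observation that conjugate words share the same primitivity status, realised via the explicit rotation computation below.

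For $Z$, suppose $uv \in Z$ with both $u,v \in V^+$ (the cases $u = \lambda$ or $v = \lambda$ are immediate since $vu = uv$). Then $uv = p^n$ for some primitive $p$ and $n \ge 2$. Write $|u| = ak + r$ where $k = |p|$ and $0 \le r < k$. Aligning $u$ with the powers of $p$, I would then decompose $p$ as $p = p_1 p_2$ with $p_1 = \tn{pref}(p, r)$ and $p_2 = \tn{suff}(p, k-r)$, giving $u = p^a p_1$ and $v = p_2 p^{n-a-1}$. The key computation is
\begin{equation*}
vu \;=\; p_2 \, p^{\,n-a-1} \, p^a \, p_1 \;=\; p_2 \, (p_1 p_2)^{\,n-1} \, p_1 \;=\; (p_2 p_1)^n,
\end{equation*}
so $vu$ is the $n$-th power of the conjugate $p_2 p_1$ of $p$, and hence $vu \in Z$.

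For $Q$, suppose $uv \in Q$ and, for contradiction, that $vu \notin Q$. Applying the argument above to the decomposition $vu = v \cdot u$ yields $uv \in Z$, contradicting $uv \in Q$. Hence $vu \in Q$, proving $Q$ is reflective.

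The only subtlety I foresee is the boundary case $r = 0$, where $p_1 = \lambda$ and $u = p^a$; here the computation degenerates to $vu = p^{n-a} p^a = p^n$, which is still in $Z$, so the conclusion is unaffected. Beyond that, the proof is essentially a bookkeeping exercise, with no substantive obstacle once the alignment of $u$ inside $p^n$ is recorded.
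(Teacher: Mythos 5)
Your proof is correct. Note, however, that the paper does not prove this lemma at all: it is imported verbatim from P\u{a}un et al.\ \cite{puaun2002robustness} as a cited result, so there is no in-paper argument to compare against. What you have written is the standard conjugacy argument: aligning $u$ against the period of $uv = p^n$ to get $u = p^a p_1$, $v = p_2 p^{n-a-1}$ with $p = p_1 p_2$, and then the regrouping $vu = p_2 (p_1 p_2)^{n-1} p_1 = (p_2 p_1)^n$ is exactly the classical computation showing conjugate words have the same primitivity status. The reduction of the $Q$ case to the $Z$ case by contradiction is clean (it only uses that $Q$ and $Z$ partition the nonempty words), and your handling of the boundary cases ($u$ or $v$ empty, $r = 0$) is adequate. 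The only pedantic point worth recording is that $a \le n-1$ must hold so that $p^{n-a-1}$ is well defined; this follows from $|u| < n|p|$ when $v \in V^+$, which you implicitly use but could state explicitly.
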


\begin{theorem} \label{thm:insref}
$Q_I$ is reflective.
\end{theorem}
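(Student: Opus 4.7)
The plan is to prove the contrapositive: assuming $vu \notin Q_I$, I will exhibit an insertion in $uv$ that destroys primitivity, contradicting $uv \in Q_I$. The engine of the argument is that $uv$ and $vu$ are cyclic conjugates of each other, so an insertion in one corresponds to an insertion in the other whose result is a cyclic conjugate of the original result; the reflectivity of the non-primitive language $Z$ (Lemma \ref{lem:refl}) then transfers non-primitivity.

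In detail, I would start from $uv \in Q_I \subseteq Q$, apply Lemma \ref{lem:refl} to conclude that $vu \in Q$, and then suppose for contradiction that $vu \notin Q_I$. Since $vu$ is already primitive, the failure of ins-robustness must come from an insertion: there exist $i \in \{0, 1, \ldots, |vu|\}$ and $a \in V$ such that the word $w' = \tn{pref}(vu, i) \cdot a \cdot \tn{suff}(vu, |vu| - i)$ is non-primitive, i.e.\ lies in $Z$.

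Next I would split on the location of the insertion. If $i \ge |v|$, set $j = i - |v|$; then $w' = v \cdot x$ with $x = \tn{pref}(u, j) \cdot a \cdot \tn{suff}(u, |u| - j)$, and inserting $a$ in $uv$ at position $j$ produces $w'' = x \cdot v$. If $i < |v|$, set $j = i + |u|$; then $w' = y \cdot u$ with $y = \tn{pref}(v, i) \cdot a \cdot \tn{suff}(v, |v| - i)$, and inserting $a$ in $uv$ at position $j$ produces $w'' = u \cdot y$. In either case $w''$ is a cyclic conjugate of $w'$, so $w' \in Z$ forces $w'' \in Z$ by Lemma \ref{lem:refl}. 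Hence $uv$ admits an insertion whose result is non-primitive, contradicting $uv \in Q_I$.

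The main obstacle is not mathematical depth but careful bookkeeping: verifying that the transported position $j$ really falls in the valid range $\{0, 1, \ldots, |uv|\}$, and that the factorisations $v \cdot x$ and $x \cdot v$ (respectively $y \cdot u$ and $u \cdot y$) indeed describe the words obtained by inserting $a$ at the two positions. Once these alignments are checked, the reflectivity of $Z$ closes the argument immediately, so no use of Theorem \ref{thm:inschar} or Lemma \ref{cor:prins} is needed.
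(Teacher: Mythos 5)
Your proof is correct, but it takes a genuinely different and more elementary route than the paper. The paper first invokes its structural characterization (Theorem \ref{thm:inschar}) to write $yx = u^r u_1 u_2 u^s$ with $u = u_1 c u_2$, and then performs a case analysis on where the cut between $y$ and $x$ falls inside that decomposition, showing in each case that $xy$ inherits the same non-ins-robust shape. You instead work straight from the definition: a witness insertion $(i,a)$ that makes $\tn{pref}(vu,i)\, a\, \tn{suff}(vu,|vu|-i)$ non-primitive is transported to a position $j$ in $uv$ so that the two inserted words are cyclic conjugates, and reflectivity of $Z$ (Lemma \ref{lem:refl}) carries the non-primitivity across. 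Your bookkeeping checks out: for $i \ge |v|$ the conjugate pair is $v\cdot x$ and $x \cdot v$ with $j = i - |v| \in \{0,\dots,|u|\}$, and for $i < |v|$ it is $y \cdot u$ and $u \cdot y$ with $j = i + |u| < |uv|$, so every insertion position in $vu$ is covered. What your approach buys is a shorter, self-contained argument that needs only the reflectivity of $Q$ and $Z$ and avoids the somewhat sketchy multi-case analysis in the paper (where Cases B.2 and C are merely asserted to be ``similar''); what the paper's approach buys is an explicit demonstration that the periodic decomposition itself is preserved under conjugation, which is reused later (e.g.\ in Theorem \ref{thm:cycl}).
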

\begin{proof}[By contradiction]
Let there be a word $w=xy \in Q_I$ such that $yx \notin Q_I$. Since $w \in Q_I$, hence $w \in Q$. By Lemma \ref{lem:refl}, we know that $Q$ is reflective. Therefore $yx \in Q$ and so $yx \in Q\setminus Q_I$, i.e. $yx \in Q_{\overline{I}}$. Using Theorem \ref{thm:inschar}, we have $yx = u^ru_1  u_2 u^s$ where $u =u_1 c u_2
\in V^*$ for some $c \in V$ and $r +s \ge 1$.
There can be three possibilities as follows.

\begin{description}
\item[\textbf{Case A}] If $y = u^{r_1} u'$, $x = u'' u^{r_2} u_1 u_2 u^s$ where $u = u' u''$ and $r_1 + r_2 + 1 = r$.

In this case $xy = u'' u^{r_2} u_1 u_2 u^s u^{r_1} u'$
= $(u'' u')^{r_2} u'' u_1 u_2 u' (u'' u')^{s + r_1}$.

Since $u = u_1 c u_2$, therefore $u'' u_1 c u_2 u' = u'' u u' = (u'' u')^2$.

Therefore $ (u'' u')^{r_2} u'' u_1 c u_2 u' (u'' u')^{s + r_1}$
= $(u'' u')^{s+r+1}$, 
i.e. $xy \in Q_{\overline{I}}$, which is a contradiction.

\item[\textbf{Case B}] $y = u^{r} u'$, $x = u'' u^s$ where $u' u'' = u_1 u_2$
\begin{description}
\item[\textbf{Case B.1}] If $u' = u_1'$ and $u'' = u_1'' u_2 $ where $u_1' u_1'' = u_1$.

Since $u = u_1 c u_2 = u_1' u_1'' c u_2$.

In this case $xy =  u'' u^s u^{r} u' = u_1'' u_2 u^s u^r u_1'$.

Now $u_1'' c u_2 u^s u^r u_1' = (u_1'' c u_2 u_1')^{r+s+1}$.

Therefore $xy \in Q_{\overline{I}}$, a contradiction.

\item[\textbf{Case B.2}] If $u' = u_1 u_2'$ and $u'' = u_2'' $ where $u_2' u_2'' = u_2$. This is similar to Case B.1.
\end{description}
\item[\textbf{Case C}] If $y = u^{r} u_1 u_2 u^{s_1} u'$, $x = u'' u^{s_2}$ where $u = u' u''$. This case is similar to the Case A.
\end{description}
Hence $Q_I$ is reflective. 
\end{proof}

\begin{corollary}
$Q_{\overline{I}}$ is reflective.
\end{corollary}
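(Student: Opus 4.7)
The plan is to deduce the reflectivity of $Q_{\overline{I}}$ directly from the two reflectivity results already available, namely Lemma \ref{lem:refl} (which gives reflectivity of $Q$) and Theorem \ref{thm:insref} (which gives reflectivity of $Q_I$), using the identity $Q_{\overline{I}} = Q \setminus Q_I$.

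Concretely, I would start by taking an arbitrary factorization $w = xy$ with $w \in Q_{\overline{I}}$, and then argue that $yx \in Q_{\overline{I}}$. First, since $w \in Q_{\overline{I}} \subseteq Q$, Lemma \ref{lem:refl} applies and gives $yx \in Q$. It remains only to show $yx \notin Q_I$.

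For this last step, I would argue by contradiction: if $yx$ were in $Q_I$, then by Theorem \ref{thm:insref} the reflectivity of $Q_I$ would force $xy \in Q_I$, contradicting the assumption $xy \in Q_{\overline{I}} = Q \setminus Q_I$. Hence $yx \in Q \setminus Q_I = Q_{\overline{I}}$, completing the argument.

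There is essentially no obstacle here; the proof is a one-line consequence of closing $Q_{\overline{I}}$ under reflection by intersecting the reflection-closed set $Q$ with the complement of the reflection-closed set $Q_I$. The only thing to be careful about is writing the contrapositive cleanly and citing the two earlier results in the correct direction, since the whole content of the corollary is that reflectivity passes to set differences when both operands are themselves reflective.
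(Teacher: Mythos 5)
Your proposal is correct and follows essentially the same argument as the paper: both deduce $yx \in Q$ from the reflectivity of $Q$, then rule out $yx \in Q_I$ by appealing to the reflectivity of $Q_I$ (Theorem \ref{thm:insref}), which would otherwise force $xy \in Q_I$. The only difference is cosmetic — the paper frames the whole proof as a contradiction, while you give a direct proof with a contradiction only in the final step.
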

\begin{proof}
We prove it by contradiction. Let there be a word $w=xy \in Q_{\overline{I}}$ such that $yx \notin Q_{\overline{I}}$.
We have $xy \in Q$ and $Q$ is reflective, so $yx \in Q$.
 Therefore $yx \in Q \setminus  Q_{\overline{I}}$, i.e. $yx \in Q_I$. But $Q_I$ is reflective by Theorem \ref{thm:insref}, we have
$xy \in Q_I$, which is a contradiction. Hence $yx \in Q_{\overline{I}}$. 
\end{proof}

\begin{theorem}\label{thm:cycl}
A word $w$ is in the set $Q_{\overline{I}}$ if and only if it is of the form $u^n u'$ or its cyclic permutation
for some $u \in Q$, $u = u' a$ , $a \in V$ and $n \ge 1$.
\end{theorem}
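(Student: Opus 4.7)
The plan is to derive both directions from Theorem~\ref{thm:inschar}, invoking the reflectivity of $Q_{\overline{I}}$ from the preceding corollary whenever cyclic permutations need to be handled. For the $(\Leftarrow)$ direction, the word $u^{n} u'$ itself fits the hypothesis of Theorem~\ref{thm:inschar} via the choice $u_{1} := u'$, $u_{2} := \lambda$, $c := a$, $r := n$, $s := 0$ (so $r + s = n \ge 1$); hence $u^{n} u' \in Q_{\overline{I}}$, and reflectivity lifts this to every cyclic permutation.

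The $(\Rightarrow)$ direction is the substantive part. Starting from $w \in Q_{\overline{I}}$, Theorem~\ref{thm:inschar} supplies a decomposition $w = u^{r} u_{1} u_{2} u^{s}$ with $u = u_{1} c u_{2} \in Q$, $c \in V$, and $r + s \ge 1$. The key move is to apply the cyclic shift that sends the initial segment $u^{r} u_{1}$ to the end, producing the conjugate
$$w' \;=\; u_{2}\, u^{r+s}\, u_{1} \;=\; u_{2}\,(u_{1} c u_{2})^{r+s}\, u_{1} \;=\; (u_{2} u_{1} c)^{r+s}(u_{2} u_{1}).$$
Setting $\tilde{u} := u_{2} u_{1} c$ and $\tilde{u}' := u_{2} u_{1}$, one has $\tilde{u} = \tilde{u}' c$, and $\tilde{u}$ is a conjugate of the primitive word $u$, hence $\tilde{u} \in Q$ by Lemma~\ref{lem:refl}. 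Thus $w'$ equals $\tilde{u}^{n} \tilde{u}'$ with $n = r + s \ge 1$, and $w$ is therefore a cyclic permutation of this word, matching the form demanded by the theorem.

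The main obstacle is spotting the correct rotation; once identified, the algebraic regrouping and the primitivity of $\tilde{u}$ follow routinely. Some care is required to verify that the degenerate cases $r = 0$, $s = 0$, $u_{1} = \lambda$, or $u_{2} = \lambda$ are all captured by the same formula, but in each of these the rotation simply becomes shorter or trivial while the regrouping identity continues to hold intact.
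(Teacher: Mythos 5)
Your proposal is correct and follows essentially the same route as the paper: the forward direction uses the identical rotation and regrouping $u_2\,u^{s}u^{r}\,u_1 = (u_2 u_1 c)^{r+s}\,u_2 u_1$ together with reflectivity, and your backward direction merely reroutes through Theorem~\ref{thm:inschar} plus reflectivity of $Q_{\overline{I}}$ instead of the paper's direct observation that inserting $a$ yields a conjugate of $u^{n+1}$. Both variants are sound, so no further changes are needed.
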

\begin{proof} We prove the sufficient and necessary conditions below.
\begin{description}
\item[($\Rightarrow$)] Let $w \in Q_{\overline{I}}$, then $w$ can be written as $w = u^r u_1  u_2 u^s$ for some $u \ (=u_1 a u_2) \in Q$ and $a \in V$.
Since $Q_{\overline{I}}$ is reflective, therefore $u_2 u^s u^r u_1  = (u_2
u_1 a)^{r+s} u_2 u_1$ is also in $Q_{\overline{I}}$.

\item[($\Leftarrow$)] If a word $w$ is a cyclic permutation of $u^n u'$ for $n
\ge 1$ and $u = u' a$ then after insertion of a symbol $a$, it gives a cyclic permutation of $u^{n+1}$ which
is non-primitive (since $Z$ is reflective). Therefore, $w \in
Q_{\overline{I}}$.
\end{description} 
\end{proof}

We observe that a word $w$ is periodic with period $p$ ($\ge 2$) divides $|w| + 1$ and $p \le |w| $ then $w$ is non-ins-robust primitive word.
Since $Q_I$ is reflective, therefore any cyclic permutation of $w$ is also non-ins-robust primitive word.
We know that cyclic permutation of a primitive word is also primitive. Cyclic permutation of an ins-robust primitive word is primitive. In next result, we show that it remains ins-robust too.

\begin{corollary}
Cyclic permutation of a ins-robust primitive word is ins-robust.
\end{corollary}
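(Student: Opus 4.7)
The plan is to derive this corollary directly from Theorem \ref{thm:insref}, since reflectivity of $Q_I$ and closure under cyclic permutations are essentially the same statement. First I would recall that every cyclic permutation of a word $w$ of length $n$ has the form $vu$ where $w = uv$ for some prefix $u$ and suffix $v$ (with $|u|$ ranging over $\{0, 1, \ldots, n-1\}$). So proving that every cyclic permutation of $w \in Q_I$ lies in $Q_I$ reduces to showing $vu \in Q_I$ whenever $uv \in Q_I$.

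Next I would invoke Theorem \ref{thm:insref}, which states precisely that $Q_I$ is reflective: $uv \in Q_I$ implies $vu \in Q_I$ for all $u, v \in V^*$. Applying this with the chosen decomposition yields $vu \in Q_I$, hence every cyclic permutation of $w$ belongs to $Q_I$.

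There is no real obstacle here; the content of the statement has already been absorbed into the reflectivity theorem, so the proof is a one-line appeal to Theorem \ref{thm:insref} after noting the trivial fact that cyclic permutations are obtained by splitting $w$ into a prefix and suffix and swapping them. If one wanted to be fully explicit, a short induction on the number of single-letter rotations applied to $w$ could be included, each step using reflectivity with $u$ of length one, but this is not necessary.
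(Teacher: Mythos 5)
Your proposal matches the paper's proof exactly: both observe that a cyclic permutation of $w$ is $vu$ for some factorization $w = uv$ and then apply the reflectivity of $Q_I$ established in Theorem \ref{thm:insref}. The argument is correct and no further comparison is needed.
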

\begin{proof}
Let $w \in Q_I$. Then cyclic permutation of $w$ will be $yx$ for some partition $w= xy$. Since $Q_I$ is reflective. Therefore $yx$ is also ins-robust primitive word.
This proves that any cyclic permutation of an ins-robust primitive word is ins-robust.
\end{proof}
Observe that not all infinite subsets of $Q$ are reflective. For example,
the subset $\{a^n b^n \mid n \geq 1\}$ of $Q$ over the alphabet $\{a, b\}$
is not reflective.

\section{Ins-robust Primitive Words and Density} \label{sec:den}
A language $L \subseteq V^*$ is called a \emph{dense} language if for every word $w \in V^*$, there exist words $x$, $y \in V^{*}$ such that $xwy \in L$.
A language $ L \subseteq V^*$ is called \emph{right $k$-dense} if for every $u \in V^*$ there exists a word $x \in V^*$ where $|x| \le k$ such that $ux \in L$. The language is said to be right dense if its right $k$-dense for every $k \ge 1$. 

It is easy to see that $Q$ is right dense (see Lemma \ref{lem:ext}). The following result shows the relation between the language of primitive words $Q$ with the density.

\begin{theorem} \label{den1}
If $|w| = n$ and $wa^n \in Q_{\overline{I}}$ where $w \notin a^*$, then  $wa^n = u^2 u_1 u_2$, $u=u_1bu_2$ for $b \ne a$.
\end{theorem}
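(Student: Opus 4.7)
The plan is to start from Theorem \ref{thm:inschar}, which guarantees a decomposition $wa^n = u^r u_1 u_2 u^s$ with $u = u_1 c u_2 \in Q$ and $r+s \ge 1$. Comparing lengths, $|u_1 u_2| = |u|-1$, so the decomposition forces the length equation $(r+s+1)|u| = 2n+1$. The right-hand side is odd, which will be the main lever: it kills every decomposition in which $r+s+1$ is even, and it will also prevent $|u|$ from being too small unless $u$ lies in $a^*$.

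Next I would show $s = 0$. Suppose $s \ge 1$; then $u$ is a suffix of $wa^n$. If $|u| \le n$ that suffix sits inside $a^n$, making $u \in a^*$, and primitivity forces $u = a$, whence $wa^n = a^{r+s}$ contradicts $w \notin a^*$. If instead $|u| > n$, the length equation gives $r+s+1 < 2 + 1/n$, so $r+s+1 = 2$, and then $|u| = (2n+1)/2$ is not an integer. Either way we get a contradiction, so $s = 0$ and $(r+1)|u| = 2n+1$ with $r \ge 1$. The case $r=1$ is killed at once by parity of $2n+1$, so $r \ge 2$.

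I would then rule out $r \ge 3$ by the same "suffix-inside-$a^n$" trick but applied to a longer suffix. The tail $u \cdot u_1 u_2$ of $wa^n$ has length $2|u|-1 \le (2n+1)/2 - 1 < n$, so it lies entirely inside $a^n$, forcing $u \in a^*$, hence $u = a$, hence $wa^n = a^{2n}$, contradicting $w \notin a^*$. Thus $r = 2$, giving exactly the claimed form $wa^n = u^2 u_1 u_2$ with $|u| = (2n+1)/3$.

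The last step is to check that the inserted letter $c$ is different from $a$. The suffix $u_1 u_2$ has length $|u|-1 = (2n-2)/3 < n$, so it is contained in $a^n$; thus both $u_1$ and $u_2$ lie in $a^*$. If $c = a$, then $u = u_1 c u_2 \in a^*$, so primitivity of $u$ forces $|u|=1$, hence $n=1$ and $wa = u^2 = a^2$, again contradicting $w \notin a^*$. Therefore $c = b$ for some $b \ne a$, which is the desired conclusion. The main obstacle is bookkeeping of the small-$n$ and small-$|u|$ edge cases (especially $|u|=1$), but in each case the hypothesis $w \notin a^*$ provides the needed contradiction.
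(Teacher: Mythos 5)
Your proof is correct and follows essentially the same route as the paper's: derive the decomposition $wa^n = u^r u_1 u_2 u^s$, use the length identity $(r+s+1)|u| = 2n+1$ together with the fact that short suffixes of $wa^n$ lie in $a^*$ to eliminate $s \ge 1$, $r=1$, and $r \ge 3$, and then conclude $b \ne a$ from primitivity of $u$. Your bookkeeping of the edge cases is in fact tighter than the paper's own write-up, but the underlying argument is the same.
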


\begin{proof}
Let $wa^n \in Q_{\overline{I}}$. Then $wa^n = u^r u_1 u_2 u^s$, where $u = u_1 b u_2 $, for some $b \in V$.
We claim that $r=2 \ and \ s=0$.
If $s \ge 1$ Then $|u| \le n+1$.

\textbf{Case 1.} In this case we can have two cases depending on the length of $u$.

\textbf{Case 1a.} $|u| = n+1$ if $wa^n = u_1 u_2 u$.
Hence $u = ba^n$ for some $b \ne a$. Therefore $|u| = n+1$ and so $|u_1 u_2| = n $.
$|wa^n| = 2n+1$, which is a contradiction as $|wa^n|=2n$.

\textbf{Case 1b.} If $s \ge 1$ and $|u| \le n$, then $u = a^r$, where $r = |u|$, and therefore $u_1u_2 = a^{r-1}$.
$wa^n$ = $a^{2n}$. This case also leads to a contradiction.

Therefore $s=0$.
Hence $wa^n = u^r u_1 u_2$.

\textbf{Case 2.} $r=1$. This case is not possible.
Because in this case $|wa^n|= |u u_1u_2|=2n$, which implies $|u|= (2n+1)/2$ a non-integral value.

\textbf{Case 3.} If $r \ge 2$. Then $|wa^n| = |u^r u_1 u_2|  = ((r+1)|u|-1)$.
Since $|u_1u_2|$ = $|u|-1$, therefore $u_1u_2 = a^k$, where $k = |u| -1$.

If $r \ge 3$ then $|wa^n| = |u^r u_1 u_2|  = ((r+1)|u|-1)$.
In this case $|u u_1u_2|= {\frac{2n+1}{r+1}}*2 -1$ $\le n$. Therefore $u = a^{k+1}$.
Hence $r \ge 3$ is also not possible.

Thus the only possibility is $r=2$, $|wa^n| = u^2 u_1u_2$.
Since $u_1u_2 = a^k$ and $wa^n \in Q$ therefore $w, u \notin a^* $, and so
$u = a^{k_1}ba^{k_2}$ where $k_1 + k_2 =k$ and $b \ne a$.

\end{proof}

\begin{lemma} \label{denseins1}
Let $V$ be an alphabet, $w \in V^*$ , $|w|=n$ and $a \in V$. If $wa^n \in Q_{\overline{I}}$ then for $b \ne a$, $wb^n \in Q_I$.
\end{lemma}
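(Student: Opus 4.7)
The plan is to apply Theorem \ref{den1} twice---first to $wa^n$ to pin down the structure of $w$, and then, under a proof-by-contradiction assumption, to $wb^n$---and to derive a clash by aligning the two decompositions of the common prefix $w$.

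From $wa^n \in Q_{\overline{I}}$ and Theorem \ref{den1}, I would write $wa^n = u^2 u_1 u_2$ with $u = a^{k_1} c a^{k_2}$ for some $c \neq a$, $u_1 u_2 = a^k$, $k = |u|-1$, and $|u| = (2n+1)/3$. Reading the first $n$ letters of $u^2 u_1 u_2$ reveals that $w$ consists of exactly two occurrences of $c$ together with $n-2$ copies of $a$; in particular $n \geq 4$ and $w \notin b^*$ for every $b \neq a$ (whether or not $b$ coincides with $c$). I would then verify primitivity of $wb^n$ by a short period argument: if $wb^n = p^m$ with $m \geq 2$, then since the last $n$ symbols of $wb^n$ are all $b$, the root $p$ must be a power of $b$, forcing $p = b$ and $w = b^n$, which contradicts the presence of $a$'s and $c$'s in $w$.

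Next, assume for contradiction that $wb^n \in Q_{\overline{I}}$. Applying Theorem \ref{den1} to $wb^n$ produces $wb^n = v^2 v_1 v_2$ with $v = b^{j_1} d b^{j_2}$, $d \neq b$, $v_1 v_2 = b^j$, and $|v| = (2n+1)/3 = |u|$. The letter $d$ occurs exactly twice in $wb^n$; since every non-$b$ letter of $wb^n$ lies inside $w$, this forces $d \in \{a, c\}$.

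The principal case is $d = c$, which also forces $b \neq c$. I would match the two expressions $w = a^{k_1} c a^{k_1+k_2} c a^{\ldots}$ and $w = b^{j_1} c b^{j_1+j_2} c b^{\ldots}$ for the common prefix: equating the positions of the two $c$'s (and using $|u|=|v|$) gives $k_1 = j_1$, and then comparing the block preceding the first $c$ gives $a^{k_1} = b^{k_1}$, which, together with $a \neq b$, forces $k_1 = 0$. Comparing the block between the two $c$'s then gives $a^k = b^k$, forcing $k = 0$; this collapses $u$ to $c$ and hence $wa^n$ to $cc$, whose length $2$ contradicts $|wa^n| = 2n \geq 8$. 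In the remaining case $d = a$, the count $n-2$ of $a$'s in $w$ must equal $2$, so $n = 4$; the constraints then leave $u = caa$, giving $w = caac$ and $wb^4 = caacbbbb$, and a finite check over $v \in \{abb, bab, bba\}$ shows that none satisfies $v^2 b^2 = wb^4$. The most delicate step will be the block-by-block alignment in the case $d = c$, while the case $d = a$ is settled by the small finite verification at $n = 4$.
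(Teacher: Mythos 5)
Your proof is correct and follows the same overall strategy as the paper: apply Theorem \ref{den1} to $wa^n$, assume for contradiction that $wb^n\in Q_{\overline{I}}$, apply Theorem \ref{den1} again, and play the two decompositions of the common prefix $w$ against each other. The execution differs in the endgame. The paper observes that $u$ and $v$ are both prefixes of $w$ of the same length $(2n+1)/3\le n$, concludes $u=v$ immediately, and then compares the patterns $a^{k_1}ba^{k_2}=c^{k_1'}dc^{k_2'}$ (its final case split is terse and does not cleanly handle, e.g., $k_1=k_1'=0$). You instead extract the letter content of $w$ (exactly two occurrences of the distinguished letter plus $n-2\ge 2$ copies of $a$, forcing $n\ge 4$) and run a position/counting argument, finishing the residual $n=4$ case by a finite check; this is all valid, though your ``principal case'' $d=c$ can be killed in one line --- the $v$-decomposition allows only two non-$b$ letters in $wb^n$, while the $n-2\ge 2$ occurrences of $a$ in $w$ together with the two $c$'s already give at least four, so the block-by-block alignment is unnecessary. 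Two genuine improvements on your side: you verify that $wb^n$ is primitive (the paper concludes $wb^n\in Q_I$ from $wb^n\notin Q_{\overline{I}}$ without checking membership in $Q$, which the definition $Q_I=Q\setminus Q_{\overline{I}}$ requires), and you record explicitly that $w\notin b^*$, which is needed to invoke Theorem \ref{den1} the second time. The paper's $u=v$ shortcut is worth knowing, as it would let you merge your two cases into a single comparison of equal-length prefixes of $w$.
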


\begin{proof} Let $wa^n \in Q_{\overline{I}}$. Then, by Theorem \ref{den1} we have, \\
$
\begin{array}{ll}
wa^n = u^2 u_1u_2 \tn{ and } u_1u_2 = a^k.\\
u=a^{k_1}ba^{k_2} \tn{ where } a \ne b.\\
\end{array}
$\\[5pt]
Let $wc^n$ be also in $ Q_{\overline{I}}$ for some $c \ne a$. Then, by Theorem \ref{den1} we have,\\
$
\begin{array}{ll}
wc^n = v^2 v_1v_2 \tn{ and } v_1v_2 = c^k.\\
v=c^{k_1'}dc^{k_2'} \tn{ where } c \ne d.
\end{array}
$\\
But since $|u| = |v|$ and $w = uu' = vv'$ where $u' = u u_1 u_2$ and $v' = v v_1 v_2$, 
therefore $u=v$, that is,
$a^{k_1}ba^{k_2} = c^{k_1'}dc^{k_2'}$.\\
If $k_1 < k_1'$ then $a=b=c=d$, which is a contradiction. Alternatively, if $k_1 = k_1'$ then $a=c$ which is again a contradiction. Therefore $wc^n \in Q_I$.

\end{proof}

\begin{theorem}
The language $Q_I$ is dense over the alphabet $V$.
\end{theorem}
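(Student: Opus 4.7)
The plan is to use Lemma \ref{denseins1} as the main engine: for a ``generic'' word $w$ we will pad it on the right by $a^{|w|}$ for some letter $a$, and either this already lies in $Q_I$ or Lemma \ref{denseins1} guarantees that padding by $b^{|w|}$ for the other letter does. A small edge case, namely when $w$ is a power of a single letter, has to be handled separately because the hypothesis of Lemma \ref{denseins1} requires $wa^n$ to be primitive, which can fail when $w \in c^*$.

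Fix two distinct letters $a, b \in V$. First I would treat the degenerate case: if $w = c^k$ for some $c \in V$ and $k \ge 0$ (including $w = \lambda$), pick $d \in V \setminus \{c\}$ and take $x = c^{\max(0,\, 2-k)}$, $y = d^2$. Then $xwy = c^m d^2$ with $m \ge 2$, and by the example following the definition of ins-robust primitive words ($a^m b^n \in Q_I$ for $m,n \ge 2$), we have $xwy \in Q_I$.

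Next, assume $w$ contains at least two distinct letters, so $w \notin c^*$ for every $c \in V$. Let $n = |w|$. The first substantive step is to verify that $wa^n$ is primitive: if $wa^n = p^k$ for some primitive $p$ and $k \ge 2$, then $|p| = 2n/k \le n$, so the trailing copy of $p$ sits entirely inside the $a^n$ suffix, forcing $p = a^{|p|}$; primitivity of $p$ then forces $|p| = 1$, whence $wa^n = a^{2n}$ and $w = a^n$, contradicting the assumption. By symmetry $wb^n$ is primitive too. Now if $wa^n \in Q_I$, we are done by taking $x = \lambda$, $y = a^n$. Otherwise $wa^n \in Q \setminus Q_I = Q_{\overline{I}}$, and Lemma \ref{denseins1} yields $wb^n \in Q_I$, so $x = \lambda$, $y = b^n$ works.

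The only genuine work is the primitivity check for $wa^n$ in the main case; Lemma \ref{denseins1} then closes the argument immediately. The main obstacle I anticipate is simply remembering to dispatch the single-letter case, since it falls outside the scope of Lemma \ref{denseins1} but is easily covered by the explicit $c^m d^2$ construction.
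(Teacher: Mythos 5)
Your proof is correct and follows essentially the same route as the paper, which likewise concludes directly from Lemma \ref{denseins1} that $wb^n \in Q_I$ for $n = |w|$ and a suitable letter $b$. In fact your version is more careful than the paper's one-line argument: you explicitly dispatch the case $w \in c^*$ (where Lemma \ref{denseins1} does not apply) and verify that $wa^n$ is primitive before invoking the dichotomy $wa^n \in Q_I$ or $wa^n \in Q_{\overline{I}}$, two points the paper glosses over.
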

\begin{proof} Consider a word $w$. We only need to consider the case when $w \notin Q_I$, that is,  $w \in V^* \setminus Q_I$. By Lemma \ref{denseins1}, there exists $b \in V$ such that $wb^n \in Q_I$, where $n = |w|$.

\end{proof}

\section{Relation of $Q_I$ with the Other Formal Languages}\label{sec:for}
We now investigate the relation between the language of
ins-robust primitive words with the traditional languages in Chomsky hierarchy. We prove that the language of ins-robust primitive words over an alphabet is not regular and also show that the language of non-ins-robust primitive words is not context-free.
For completeness, we recall the pumping lemma for regular languages and pumpimg lemma for context-free languages which
will be used to show that $Q_{I}$ is not regular and
$Q_{\overline{I}}$ is not context-free respectively.

\begin{lemma}[Pumping Lemma for Regular Languages \cite{hopcroft1979introduction}]
 For a regular language $L$, there exists an integer $n > 0$ such that for every word $w \in L$ with $|w| \ge n$, there
exist a decomposition of $w$ as $w = xyz$ such that the following conditions
holds.
\begin{enumerate}[(i)]
\item $|y| > 0$,
\item $|xy| \le n$, and
\item $xy^i z \in L $ for all $i \ge 0$.
\end{enumerate}
\end{lemma}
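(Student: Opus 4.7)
The plan is to invoke the equivalence between regular languages and languages accepted by deterministic finite automata, and then apply a pigeonhole argument to the sequence of states visited while reading a sufficiently long word. Since the statement being recalled is the textbook Pumping Lemma and is attributed to Hopcroft--Ullman, the proof itself is standard; I sketch it for completeness.

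First, I would fix a deterministic finite automaton $M=(S,V,\delta,s_0,F)$ with $L(M)=L$, and set the pumping constant $n=|S|$. For any word $w=a_1 a_2 \cdots a_m \in L$ with $m \ge n$, let $s_i = \delta^{*}(s_0, a_1 \cdots a_i)$ denote the state of $M$ after reading the length-$i$ prefix of $w$, for $i \in \{0,1,\ldots,m\}$. Then the $n+1$ states $s_0, s_1, \ldots, s_n$ are drawn from a set of only $n$ states, so by the pigeonhole principle there must exist indices $0 \le i < j \le n$ with $s_i = s_j$.

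Given this repetition, I would decompose $w = xyz$ by setting $x = a_1 \cdots a_i$, $y = a_{i+1} \cdots a_j$, and $z = a_{j+1} \cdots a_m$. Then $|y| = j - i > 0$, establishing condition (i), and $|xy| = j \le n$, establishing condition (ii). For condition (iii), observe that since $\delta^{*}(s_i, y) = s_j = s_i$, iterating gives $\delta^{*}(s_i, y^k) = s_i$ for every $k \ge 0$, including $k = 0$. Therefore $\delta^{*}(s_0, xy^k z) = \delta^{*}(s_i, z) = \delta^{*}(s_0, xyz) \in F$ for all $k \ge 0$, so $xy^k z \in L$.

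There is no substantive obstacle here: the only point that requires a little care is the bookkeeping that ensures $|xy| \le n$ (one must restrict the pigeonhole argument to the first $n+1$ states, not to the entire sequence $s_0,\ldots,s_m$), and the verification that the $k = 0$ case is also covered by the same state-equality $s_i = s_j$. Otherwise the argument is a direct application of pigeonhole to the state trajectory of a DFA.
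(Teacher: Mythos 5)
Your proof is correct: it is the standard pigeonhole argument on the state sequence of a DFA, with the right care taken to restrict the repetition to the first $n+1$ states so that $|xy| \le n$, and to cover the $i=0$ case. The paper itself gives no proof of this lemma --- it is simply recalled with a citation to Hopcroft and Ullman --- and your argument is precisely the textbook proof that citation refers to, so there is nothing to compare beyond noting that you have filled in a proof the paper deliberately omits.
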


Let us recall a result which will be used in proving that the language of ins-robust primitive words is not regular.
\begin{lemma}[\label{lem:sola}\cite{horvath2005language}]
For any fixed integer $k$, there exist a positive integer $m$ such that the
equation system $(k-j)x_j+j=m$, $j=0,1,2, \ldots, k-1$ has a nontrivial solution
with appropriate positive integers $x_1,x_2,\ldots,x_j > 1$.
\end{lemma}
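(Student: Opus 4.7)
The plan is to translate the system of equations into a set of simultaneous congruences on $m$ and then exhibit such an $m$ by an LCM construction. First I would observe that, given $m$, the equation $(k-j)x_j + j = m$ has a positive integer solution $x_j$ precisely when $(k-j) \mid (m-j)$; moreover $x_j > 1$ is equivalent to $m - j > k - j$, that is $m > k$. So the system is solvable with all $x_j > 1$ if and only if there exists an integer $m > k$ satisfying $m \equiv j \pmod{k-j}$ for every $j \in \{0, 1, \ldots, k-1\}$.

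Next I would perform the reparametrization $i = k - j$, under which these congruences become $m \equiv k - i \pmod{i}$, or equivalently $m \equiv k \pmod{i}$, for $i = 1, 2, \ldots, k$. This single change of variables collapses the $k$ separate modular conditions into one divisibility statement: $i \mid (m - k)$ for every $i \in \{1, 2, \ldots, k\}$, which is exactly the condition $\mathrm{lcm}(1, 2, \ldots, k) \mid (m - k)$.

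To close the argument I would set $L = \mathrm{lcm}(1, 2, \ldots, k)$ and take $m = k + L$ (or more generally $m = k + tL$ for any positive integer $t$). Since $L \geq k \geq 1$, this choice satisfies $m > k$, so every $x_j = (m-j)/(k-j)$ is a well-defined integer at least $2$, as required. There is essentially no obstacle here beyond noticing the change of variables $j \mapsto k - j$ that uniformizes the right-hand sides of the congruences; once that is noticed the Chinese Remainder Theorem is not even needed, since all $k$ congruences collapse to a single one.
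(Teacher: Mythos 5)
Your proof is correct and complete. Note, however, that the paper does not prove this lemma at all: it is stated as a quoted result from the cited reference (Horv\'ath et al.), so there is no internal proof to compare against, and your argument has to stand on its own --- which it does. The decisive observation, that $(k-j)\mid(m-j)$ is equivalent to $(k-j)\mid(m-k)$ so that under $i=k-j$ all $k$ congruences collapse to the single condition $\mathrm{lcm}(1,\dots,k)\mid(m-k)$, is exactly right, and the explicit choice $m=k+\mathrm{lcm}(1,\dots,k)$ yields $x_j=1+\mathrm{lcm}(1,\dots,k)/(k-j)\ge 2$ for every $j\in\{0,\dots,k-1\}$, so the solution is genuinely nontrivial. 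One cosmetic point: the lemma as printed lists the unknowns as $x_1,x_2,\dots,x_j>1$, which is evidently a typo for $x_0,x_1,\dots,x_{k-1}>1$; your proof correctly handles the full range including $j=0$.
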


\begin{theorem}
$Q_{I}$ is not regular.
\end{theorem}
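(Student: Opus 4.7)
My strategy is to combine the pumping lemma with the arithmetic of Lemma~\ref{lem:sola}. Suppose for contradiction that $Q_I$ is regular and let $n$ be its pumping constant (without loss of generality $n \geq 2$, since any larger integer is also a valid pumping constant). Apply Lemma~\ref{lem:sola} with $k = n+1$ to obtain an integer $m$ and integers $x_0, \ldots, x_{k-1} \geq 2$ satisfying $(k-j)x_j + j = m$ for every $j = 0, \ldots, k-1$. Letting $s = k - j$, this says $s \mid (m - k)$ for all $s = 1, \ldots, k$; setting $d = m - k$ gives $d \geq 2$ and every $j \in \{1, 2, \ldots, n\}$ divides $d$. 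Take $p = 2d$ and consider the word
\[
w \;=\; a^{p} \, b \, a^{p+d} \, b \;=\; a^{2d} \, b \, a^{3d} \, b.
\]

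The first goal is to verify $w \in Q_I$. The word is primitive because the two $a$-blocks have different lengths. For ins-robustness I would use Theorem~\ref{thm:inschar}, which reduces the check to verifying that no single-letter insertion into $w$ is a proper power. A short case analysis, running over the structurally distinct insertion patterns (insert $a$ inside either $a$-block or at the end, insert $b$ inside either $a$-block, insert $b$ adjacent to one of the two $b$'s), shows that the inserted word is a proper power only in three numerical situations: when the two $a$-blocks of $w$ differ by exactly $1$ (i.e.\ $d = 1$), when $p = 2(p+d)$ (i.e.\ $p = -2d$), or when $p+d = 2p$ (i.e.\ $p = d$). All three are excluded by $d \geq 2$ and $p = 2d$, so $w \in Q_I$.

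Now apply the pumping lemma to $w$: some factorisation $w = xyz$ with $|xy| \leq n$ and $|y| \geq 1$ satisfies $xy^{i}z \in Q_I$ for every $i \geq 0$. Since $n < p$, the factor $y$ lies entirely inside the initial block $a^p$, hence $y = a^{j}$ for some $1 \leq j \leq n$. By construction $j \mid d$; choose $i = d/j + 1$, an integer at least $2$. Then
\[
xy^{i}z \;=\; a^{\, p + (i-1)j} \, b \, a^{p+d} \, b \;=\; a^{\, p+d} \, b \, a^{p+d} \, b \;=\; (a^{p+d} \, b)^{2},
\]
a proper square, so not primitive and in particular not in $Q_I$. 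This violates the pumping lemma and yields the desired contradiction.

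The main obstacle is the ins-robustness check for $w$: one must genuinely enumerate all structurally distinct single-letter insertions and use Theorem~\ref{thm:inschar} (together with the observation about periods dividing $|w|+1$ following Theorem~\ref{thm:cycl}) to rule out non-primitive outputs; this succeeds only because $p$, $d$, and $p+d$ are chosen to avoid the critical $2{:}1$ ratios. Once $w \in Q_I$ is secured, the divisibility property of $d$ guaranteed by Lemma~\ref{lem:sola} instantly closes the pumping argument.
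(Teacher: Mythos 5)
Your proof is correct and follows essentially the same route as the paper's: both apply the pumping lemma to a word of the form $a^{(\cdot)}\,b\,a^{(\cdot)}\,b$ and use the divisibility supplied by Lemma~\ref{lem:sola} to pump the first $a$-block up to match the second, producing the non-primitive square $(a^{p+d}b)^2$ and hence a contradiction. If anything, your write-up is more careful than the paper's, since you explicitly verify that the witness word lies in $Q_I$ and cleanly extract the common-divisibility property from Lemma~\ref{lem:sola}.
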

\begin{proof}[By contradiction]
Suppose that the language of ins-primitive words $Q_{I}$ is regular. Then
there exist a natural number $n >0$ depending upon the number of states of finite automaton for
$Q_{I}$.

Consider the word $w=a^n b a^m b,m > n+1$ and $m \neq 2n$. Note that
$w$ is an ins-primitive word over $V$, where $|V| \ge 2$ and $a \ne b$. Since $w \in Q_{I}$
and $|w| \ge n$, then it must satisfy the other conditions of pumping Lemma
for regular languages. So there exist a decomposition of $w$ into
$x$, $y$ and $z$ such that $w= xyz,|y| >0$ and $x y^i z \in Q_{I}$ for all $i \ge 0$.

Let $x = a^k,\ y=a^{(n-j)},\ z=a^{j-k} b a^m b$. Now choose $i=x_j$ and
since we know by Lemma \ref{lem:sola} that for every $j
\in \{0,1, \ldots , n-1\}$, there exists a positive integer $x_j >1$ such that $x y^{x_j} z= a^k
a^{(n-j)x_j} a^{j-k} b a^m b = a^{(n-j)x_j+j} b a^m b =  a^m b a^m b = (a^m b)^2 \notin Q_{I}$ which is a contradiction. Hence the language of ins-primitive words $Q_{I}$ is not regular.

\end{proof}

\begin{lemma}[Pumping Lemma for Context-Free Languages \cite{hopcroft1979introduction}]
\label{lem:pcfl}
Let $L$ be a CFL. Then there exists an integer $n >0$ such that for every $u \in
L$ with $|u| \ge n$, $u$ can be decomposed into $vwxyz$ such that the following
conditions hold:
\begin{enumerate}[(a)]
 \item $|wxy| \le n$.
 \item $|wy| >0$.
 \item $v w^{i} x y^{i} z \in L$ for all $i \ge0$.
\end{enumerate}
\end{lemma}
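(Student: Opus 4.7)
The plan is the standard parse-tree argument, which rests on putting the grammar in Chomsky Normal Form (CNF). First I would take a CFG $G=(N,T,P,S)$ in CNF with $L(G)=L$, set $k=|N|$, and choose a constant $n=2^{k}+1$ so that the pigeonhole step below is forced.

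Let $u\in L$ with $|u|\ge n$ and fix any parse tree $T$ for $u$. Because $G$ is in CNF, the variable-labelled internal nodes of $T$ form a binary tree whose leaves correspond bijectively to the terminal occurrences of $u$. A binary tree with at least $2^{k}+1$ leaves has height at least $k+1$, so $T$ contains a root-to-leaf path that visits more than $k$ variable nodes. By the pigeonhole principle, some variable $A$ repeats on this path; I would choose such a repetition so that the lower occurrence $A_{2}$ is as close to the leaves as possible, and call the higher occurrence $A_{1}$.

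The subtree rooted at $A_{1}$ yields a factor of the form $wxy$, while the subtree rooted at $A_{2}$ yields $x$. Writing $u=vwxyz$ accordingly gives derivations $A\Rightarrow^{*}wAy$ and $A\Rightarrow^{*}x$, which iterate to $A\Rightarrow^{*}w^{i}xy^{i}$ for every $i\ge 0$. Plugging back in, $vw^{i}xy^{i}z\in L$ for all $i\ge 0$, establishing condition (c). For condition (a), the minimality of $A_{2}$ within the subtree rooted at $A_{1}$ bounds the height of that subtree by a function of $k$, so its yield $wxy$ has length at most $n$ for the constant $n$ already fixed.

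The main obstacle, and the step where CNF is indispensable, is verifying $|wy|>0$. At $A_{1}$ the production must have the form $A\to BC$, so $A_{1}$ has two variable children; the subtree rooted at $A_{2}$ lies entirely in exactly one of them, say under $B$. Then the subtree rooted at the sibling $C$ contributes a non-empty string to $y$ (and symmetrically, if $A_{2}$ lies under $C$, the sibling $B$ contributes non-emptily to $w$). If the grammar were allowed to carry unit or $\varepsilon$-productions at $A_{1}$, this sibling argument would collapse and $w$ and $y$ could both be empty; the CNF reduction is precisely what rules this out.
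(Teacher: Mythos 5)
The paper does not prove this lemma at all---it is quoted verbatim as a known result from Hopcroft and Ullman---so there is no in-paper argument to compare against; your parse-tree/CNF proof is the standard one and is essentially correct, including the key observation that Chomsky Normal Form is what forces $|wy|>0$ via the sibling of the child containing $A_2$. The only place that needs tightening is condition (a): saying ``choose the repetition with $A_2$ as close to the leaves as possible'' does not by itself bound the height of the subtree rooted at $A_1$. The standard fix is to fix a \emph{longest} root-to-leaf path and pick the repeated variable among the lowest $k+1$ variable occurrences on that path; maximality of the path then guarantees that the subtree rooted at $A_1$ has height at most $k+1$, hence yield of length at most $2^{k}<n$, which is exactly condition (a).
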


\begin{theorem}\label{thm:cfl}
$Q_{\overline{I}}$ is not a context-free language for a binary alphabet.
\end{theorem}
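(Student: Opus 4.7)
The plan is to apply the pumping lemma for context-free languages (Lemma~\ref{lem:pcfl}). Suppose $Q_{\overline{I}}$ were context-free with pumping constant $N$, and consider $w = a^n b a^n b a^{n-1} b$ where $n = N+3$. First I would verify $w \in Q_{\overline{I}}$: the word is primitive because $|w| = 3n+2$ is not divisible by $3$, and since $w$ contains exactly three $b$'s any non-trivial power $v^k$ of $w$ would force $k = 3$ and hence $3 \mid |w|$; and $w$ is not ins-robust because inserting a single $a$ into the shortest $a$-block yields $(a^n b)^3$, which is non-primitive. The pumping lemma then supplies a decomposition $w = v_1 v_2 v_3 v_4 v_5$ with $|v_2 v_3 v_4| \le N$, $|v_2 v_4| \ge 1$, and $v_1 v_2^i v_3 v_4^i v_5 \in Q_{\overline{I}}$ for all $i \ge 0$; I will derive a contradiction by exhibiting a specific $i$ for which this membership fails.

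The case split is driven by the location of $v_2 v_3 v_4$. Since the three $b$'s of $w$ are separated by $a$-runs of length at least $n - 1 > N$, this short factor contains at most one $b$, giving three cases: (A) $v_2 v_3 v_4$ lies within a single maximal $a$-run; (B) a $b$ sits in $v_3$ with $v_2, v_4$ being powers of $a$ on either side; (C) a $b$ sits in $v_2$ or $v_4$, so that pumping with $i \ge 2$ strictly increases the $b$-count. In each case I would pump with a small $i$ (generically $i = 2$, with $i = 3$ needed in one marginal subcase) and argue that the pumped word is primitive but not in $Q_{\overline{I}}$.

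The verification relies on the following specialization of Theorem~\ref{thm:inschar}: a primitive word of shape $a^{p_1} b a^{p_2} b a^{p_3} b$ belongs to $Q_{\overline{I}}$ iff the multiset $\{p_1, p_2, p_3\}$ equals $\{k, k, k{-}1\}$, or equals $\{q, q, 2q\}$, or some identity $p_j = p_i + p_k$ holds; an analogous short list handles the four-$b$ words that arise in case (C). When these conditions are imposed on pumped block lengths (which differ from $n, n, n-1$ by contributions of size at most $N$), each reduces to an arithmetic equation such as $t = n - 2$ or $t_1 - t_2 = n - 1$, which cannot hold when $t, t_1, t_2 \le N < n - 2$. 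The crucial design choice is the third $b$: a two-$b$ candidate like $a^n b a^{n-1} b$ is defeated by an adversarial decomposition with $t_1 = t_2$ pumping symmetrically across the single $b$, which preserves the $|p - q| = 1$ witness for every $i$. With three $b$'s, the shortest block $a^{n-1}$ acts as an anchor untouched by pumping on the other two blocks, so the symmetric attack fails. The main obstacle is the simultaneous bookkeeping of every $Q_{\overline{I}}$-witness in each subcase, but the gap $n - N = 3$ makes each of the short arithmetic checks routine.
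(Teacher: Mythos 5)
Your proposal follows the same overall route as the paper: assume $Q_{\overline{I}}$ is context-free, apply the pumping lemma of Lemma~\ref{lem:pcfl} to a hand-picked witness, and rule out every decomposition by a case analysis on where the short pumpable factor sits. The differences are in the witness and in the verification machinery. The paper pumps $a^{p+1}b^{p+1}a^{p+1}b^{p}$ and simply asserts that suitable choices of $i$ (e.g.\ $i=4$) push the word out of $Q_{\overline{I}}$, leaving the membership checks implicit; you pump $a^{n}ba^{n}ba^{n-1}b$ and, more importantly, you first extract from Theorem~\ref{thm:cycl} an explicit membership criterion for words with three $b$'s in terms of the multiset of $a$-gap lengths ($\{k,k,k-1\}$, or one gap equal to the sum of the other two --- note your $\{q,q,2q\}$ clause is already subsumed by the latter). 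That criterion is correct, and it is what makes the arithmetic in cases (A) and (B) genuinely checkable; it also correctly flags the one marginal subcase of (B) (pumping gaps $2$ and $3$ with $t_1=1$, $t_2=2$ lands on $\{n,n+1,n+1\}\in Q_{\overline{I}}$ at $i=2$, so $i=3$ is needed there). What your write-up buys over the paper's is precisely this explicitness; what it still owes is case (C): pumping up a $b$ produces four-$b$ words, and the ``analogous short list'' for those (gap multiset $\{k,k,k,k{-}1\}$; opposite gaps pairwise equal up to a single $+1$; or three gaps $g$ and one $2g$) has to be derived and checked against each subcase, since near-misses do occur (for instance, pumping down in case (C) can produce gaps $\{2n-2,\,n-1\}$, which \emph{is} in $Q_{\overline{I}}$, so the choice $i\ge 2$ rather than $i=0$ matters). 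One cosmetic point: in a few subcases the pumped word is non-primitive rather than ``primitive but not in $Q_{\overline{I}}$''; that still yields the contradiction since $Q_{\overline{I}}\subseteq Q$, but the claim should be phrased as ``not in $Q_{\overline{I}}$'' rather than ``primitive and not in $Q_{\overline{I}}$''.
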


\begin{proof}
Let $V= \{a, \ b \}$ be an alphabet. On contradiction, assume that $Q_{\overline{I}}$ is a context-free language. Let $p >0$ be
an integer which is the pumping length for the language $Q_{\overline{I}}$.
Consider the string $s = a^{p+1} b^{p+1} a^{p+1} b^p$, where $a , \ b \in V$ are
distinct. It is easy to see that $s\in Q_{\overline{I}}$ and $|s| \ge p$.

Hence, by the Pumping Lemma \ref{lem:pcfl}, $s$ can be written in the form $s = uvwxy$,
where $u, v, w, x,$ and $y$ are factors, such that $|vwx| \le p$, $|vx| \ge
1$, and $uv^iwx^iy$ is in $Q_{\overline{I}}$ for every integer $i \ge 0$. By the
choice of $s$ and the fact that $|vwx| \le p$, we have
one of the following possibilities for $vwx$:
\begin{enumerate}[(a)]
\item \label{e:a} $vwx = a^j$ for some $j \le p$.
\item \label{e:b} $vwx = a^jb^k$ for some $j$ and $k$ with $j+k \le p$.
\item \label{e:c} $vwx = b^j$ for some $j \le p$.
\item \label{e:d} $vwx = b^j a^k$ for some $j$ and $k$ with $j+k \le p$.
\end{enumerate}
In Case (\ref{e:a}), since $vwx = a^j$, therefore  $vx = a^ t$ for some $t \ge 1$ and hence $uv^iwx^iy = a^{p-t+1} b^{p+1} a^{p+1} b^p  \notin Q_{\overline{I}}$ for $i =0$.

Case (\ref{e:b}) can have several subcases.
\begin{enumerate}
\item  $v= a^{j_1}, \ w = a^{j_2}, \ x= a^{j_3}b^k$.
\item  $v= a^{j_1}, \ w = a^{j_2}b^{k_1}, \ x= b^{k_2}$.
\item  $v= a^{j}b^{k_1}, \ w = b^{k_2}, \ x= b^{k_3}$.
\end{enumerate}
In Case (1), Case (2) and Case (3) if we take $i = 4$, $uv^iwx^iy \ne Q_{\overline{I}}$.

Similarly, we can obtain contradiction in Case (\ref{e:c}) and Case (\ref{e:d}) by choosing a suitable $i$.
Therefore, our initial assumption that $Q_{\overline{I}}$  is context-free, must be false.
\end{proof}
Next we prove that the language of non-ins-robust primitive words is not context-free in general.
\begin{lemma}
The language $Q_{\overline{I}}$ is not context-free over an alphabet $V $ where $V$ has at least two distinct letters.
\end{lemma}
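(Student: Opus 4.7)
The plan is to reduce the general case to the binary case already established in Theorem \ref{thm:cfl} using the standard closure of context-free languages under intersection with regular sets. Suppose for contradiction that $Q_{\overline{I}}$ over $V$ is context-free. Fix two distinct letters $a, b \in V$, and let $R = \{a,b\}^*$, which is regular. Then $Q_{\overline{I}}(V) \cap R$ is context-free.

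The crucial step is to show that this intersection coincides with $Q_{\overline{I}}(\{a,b\})$. The inclusion $Q_{\overline{I}}(\{a,b\}) \subseteq Q_{\overline{I}}(V) \cap R$ is immediate, since any insertion witnessing non-ins-robustness over $\{a,b\}$ is also a valid insertion over $V$, and primitivity does not depend on the ambient alphabet. For the reverse inclusion, suppose $w \in R$ is primitive but some insertion of a letter $c \in V$ at some position produces a non-primitive word $w' = u^n$ with $n \ge 2$. If $c \notin \{a,b\}$, then $w'$ contains exactly one occurrence of $c$, so $n \cdot |u|_c = 1$, which forces $n = 1$, a contradiction. Hence the witnessing letter must satisfy $c \in \{a,b\}$, so $w \in Q_{\overline{I}}(\{a,b\})$.

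Combining the two inclusions, $Q_{\overline{I}}(V) \cap R = Q_{\overline{I}}(\{a,b\})$, so the latter would be context-free, contradicting Theorem \ref{thm:cfl}. Therefore $Q_{\overline{I}}$ over $V$ is not context-free.

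The only subtle point in this argument is the count-of-$c$ observation that rules out foreign-letter insertions; everything else is a routine appeal to closure properties and to the binary case. I expect no serious obstacle beyond presenting this cleanly.
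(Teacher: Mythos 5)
Your proof is correct, but it takes a genuinely different route from the paper. The paper simply reruns the pumping-lemma argument of Theorem~\ref{thm:cfl} over the larger alphabet, using the same witness string $a^{p+1}b^{p+1}a^{p+1}b^p$; it does not spell out why membership in $Q_{\overline{I}}$ for a binary word is unaffected by enlarging the ambient alphabet, which is the point your count-of-$c$ observation makes explicit. You instead invoke closure of context-free languages under intersection with the regular set $\{a,b\}^*$ and prove the alphabet-restriction identity $Q_{\overline{I}}(V)\cap\{a,b\}^* = Q_{\overline{I}}(\{a,b\})$, so that the binary theorem can be used as a black box. The key inclusion rests on the observation that inserting a letter $c\notin\{a,b\}$ into a word over $\{a,b\}$ yields a word with exactly one occurrence of $c$, and a proper power $u^n$ with $n\ge 2$ has $|u^n|_c = n\,|u|_c \in \{0\}\cup\{m : m\ge 2\}$, so such an insertion can never destroy primitivity; this is exactly right. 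Your version is more modular and arguably more rigorous than the paper's one-paragraph appeal to ``the same words work''; the paper's version avoids the closure-property machinery but implicitly needs the same foreign-letter observation to justify both that the witness string lies in $Q_{\overline{I}}(V)$ and that the pumped strings do not.
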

\begin{proof}
The proof of Theorem \ref{thm:cfl} can be generalized to arbitrary alphabet $V$ having
at least two letters. The set of all words over alphabet having greater than two distinct letters also contains the words with two letters. If $Q
_{\overline{I}}$ is assumed to be a CFL over $V $ where $|V| \ge 3$, then we can choose words of the form used in Theorem \ref{thm:cfl} and obtain a contradiction. Hence the language of non-ins-robust primitive words $Q_{ \overline{I}}$ is not context-free over $V$ where $|V| \ge 2$.

\end{proof}
\section{Counting Ins-Robust Primitive Words}\label{sec:cou}
In this section we give a lower bound on number of $n$-length ins-robust primitive words.
Let $V$ be an alphabet and $Z(k) = V^k \setminus Q$ be the set of $n$-length
non-primitive words.

We have the following result that gives the number of the primitive words of
length $m$.
\begin{proposition} [\cite{lijun2001count}] \label{count}
Let $m \in N$ and $m= {m_1}^{r_1} {m_2}^{r_2} \ldots {m_t}^{r_t}$ be the
factorization of $m$, where all $m_i , 1 \le i \le t$ , are prime and $m_i \ne m_j$ for $i\ne j$, then
the number of primitive words of length $m$ is equal to
$$
\begin{array}{rl}
|V|^m - & \sum\limits_{1\le i\le t}|V|^{\frac{m}{m_i}}
   + \sum\limits_{1\le i\le j \le t}|V|^{\frac{m}{m_i m_j}}\\\\
- & \sum\limits_{1\le i\le j \le k \le t}|V|^{\frac{m}{m_i m_j m_k}}
+
\cdots +(-1)^{t-1} |V|^{\frac{m}{m_1 m_2 \cdots m_t}}
\end{array}
$$
\end{proposition}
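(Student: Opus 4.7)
The plan is to prove the formula by a standard Möbius inversion over divisors of $m$, starting from the fundamental observation that every word has a unique primitive root. First I would record the key fact (which follows from Theorem~\ref{thm:equ} applied to $w = v^k = u^\ell$): for every nonempty word $w \in V^+$ there is a unique primitive word $v$ and a unique integer $k \ge 1$ such that $w = v^k$; in particular $|v|$ divides $|w|$. This lets me partition $V^m$ according to the length $d$ of the primitive root: writing $\psi(d)$ for the number of primitive words of length $d$, I would obtain the counting identity
\begin{equation*}
|V|^m \;=\; \sum_{d \mid m} \psi(d).
\end{equation*}

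Next I would invoke Möbius inversion on the divisor poset of $m$ to extract $\psi(m)$, yielding
\begin{equation*}
\psi(m) \;=\; \sum_{d \mid m} \mu(d)\,|V|^{m/d},
\end{equation*}
where $\mu$ is the number-theoretic Möbius function. The formula in the proposition is then obtained by plugging in the values of $\mu$ using the prime factorization $m = m_1^{r_1} m_2^{r_2} \cdots m_t^{r_t}$: the only divisors $d$ of $m$ that contribute are the squarefree ones, i.e.\ products of distinct primes among $\{m_1,\ldots,m_t\}$, and for such a product of $k$ distinct primes one has $\mu(d) = (-1)^k$. Grouping these contributions by $k = 0, 1, 2, \ldots, t$ reproduces the alternating sum displayed in the proposition.

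The two steps that need genuine care are the uniqueness of the primitive root (which justifies why $\sum_{d\mid m}\psi(d)$ counts each word of $V^m$ exactly once, and so gives the clean identity above) and the Möbius inversion itself. The main obstacle is really the first step: I would spell out that if $w = v^k$ with $v$ primitive and also $w = u^\ell$ with $u$ primitive, then by Theorem~\ref{thm:equ} both $u$ and $v$ are powers of a common word, forcing $u = v$ and $k = \ell$ by primitivity. Once that is in place, the inversion and the translation from $\mu$-values to the explicit alternating sum over subsets of $\{m_1,\ldots,m_t\}$ are essentially bookkeeping. I would close by noting that when $m$ is prime the formula degenerates to the familiar $|V|^m - |V|$.
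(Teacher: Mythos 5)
The paper offers no proof of this proposition at all: it is imported verbatim as a cited result from the reference, so there is nothing internal to compare against. Your Möbius-inversion argument is the standard, correct derivation, and it is essentially certainly the one underlying the cited source: the identity $|V|^m=\sum_{d\mid m}\psi(d)$ from uniqueness of the primitive root, inversion to get $\psi(m)=\sum_{d\mid m}\mu(d)|V|^{m/d}$, and expansion over squarefree divisors. Two small points deserve care. First, your appeal to Theorem~\ref{thm:equ} for uniqueness of the primitive root needs a slight massaging, since that theorem requires all three exponents to be at least $2$: given $v^k=u^\ell$ with $u,v$ primitive and $k,\ell\ge 2$, apply it to $u^\ell v^k=v^{2k}$ to conclude $u$ and $v$ are powers of a common word, hence equal by primitivity (the cases $k=1$ or $\ell=1$ are immediate). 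Second, your derivation produces the sign $(-1)^k$ for the term coming from a product of $k$ distinct primes, so the final term carries $(-1)^t$, not the $(-1)^{t-1}$ displayed in the proposition, and the index sets should read $i<j$, $i<j<k$, etc.; a sanity check with $m=6$, $|V|=2$ gives $2^6-2^3-2^2+2=54$, confirming that your version is the correct one and the displayed statement has an off-by-one in the last sign. So your proof is right, and in fact it quietly corrects the statement as printed.
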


We observe that the deletion of a symbol from a $n$-length non-primitive word gives a maximum of $(n-1)$-different non-ins-robust primitive words  when the word is of type $a_1 a_2 \ldots a_{n}$ such that $a_i \ne a_{i+1}$ for $1 \le i \le n-1$ and minimum it can be $zero$ if the word is of type $a^r, \ r>2, \ a \in V$.
Given a word $w \in Z(n)$.
The number of words that can be obtained by deleting a symbol from $w$ is \\
$$0 \le |\{w_1 w_2 \mid w_1 a w_2 = w, ~w_1, w_2 \in V^*, a \in V \}| \le n.$$
We know from Lemma \ref{cor:prins} that a non-primitive word $w$ remains non-primitive after deleting a symbol $a$ if $w = a^n $ and $n \ge 3$. Otherwise a non-ins-robust primitive word.
$$Q_{\overline{I}}(n) = \{w_1 w_2 \mid w_1 a w_2 \in Z_{n+1}, a \in V, w_1, w_2 \in V^*\}$$

Therefore, the number of non-ins-robust primitive words of length $n$,
$Q_{\overline{I}}(n)$, is the difference between the number of all words obtained by deleting a symbol from
the words of set
$$Z(n+1) \setminus V^{n+1} \ where \ V^{n+1} =\{ a^{n+1} \mid a \in V \} \ for \ n \ge 2$$

We can find an upper bound on number of non-ins-robust primitive words of length
$n \ge 2$ as follows.
$$
\begin{array}{ll}
|Q_{\overline{I}}(n)|  = |\{w_1  w_2 \mid w =w_1 b w_2 \in Z(n+1)  \setminus V^{n+1} \\  ,w_1, w_2 \in V^*, b \in V \}|\\\\

|Q_{\overline{I}}(n)|  = |\{w_1  w_2 \mid w =w_1 a w_2 \in Z(n+1),\\  w_1, w_2 \in V^*, a \in V \}| - (n+1).|V|\\\\
 \le (n+1) . (|Z(n+1|) - |V|).
\end{array}
$$
From the Proposition \ref{count}, we know the number of primitive words of fixed
length. Thus the number of ins-robust-primitive words of length $n$, $Q_I(n)$, over an
alphabet $V$ is equal to $|Q_n| - |Q_{\overline{I}}(n)|$.

\section{Recognizing Ins-Robust Primitive Words} \label{sec:alg}
In this section, we give a linear time algorithm to determine if a given primitive word $w$ is ins-robust. We design the algorithm that exploits the property of the structure of ins-robust primitive words. We state some simple observations before presenting the algorithm.
The following theorem is based on the structure of ins-robust primitive word. We know that a word $w$ is ins-robust if it can not be written as $t^r t_1  t_2 t^s$ for $t \in Q$, $t = t_1 a t_2$ for some $a \in V$ , $r,s \ge 0$ and $r+s \ge 1$.

\begin{theorem} \label{insrp}
Let $u$ be a primitive word. Then $u$ will be \textbf{non-ins-robust} primitive word iff $uu$ contains at least one periodic word of length $|u|$ with period $p$ such that $p$ divides of length $|u| + 1$ and $p \le |u|$.
\end{theorem}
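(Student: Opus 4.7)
The plan is to pass back and forth between the structural characterization of $Q_{\overline{I}}$ given by Theorem \ref{thm:cycl} and the periodicity condition in the statement. Throughout, write $n = |u|$.

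For the forward implication, I would apply Theorem \ref{thm:cycl} to write $u$ as a cyclic permutation of some $t^k t'$ with $t \in Q$, $t = t' a$ for some $a \in V$, and $k \ge 1$. Decomposing $u = \alpha\beta$ so that $\beta\alpha = t^k t'$, the factor of $uu = \alpha\beta\alpha\beta$ occupying positions $|\alpha|+1, \ldots, |\alpha|+n$ is exactly $\beta\alpha = t^k t'$. This factor has length $n$ and period $p = |t|$, and the identity $n = (k+1)p - 1$ yields $p \mid (n+1)$ and $p \le n$ at once.

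For the converse, I would start with a factor $f$ of $uu$ of length $n$ with period $p$ satisfying $p \mid (n+1)$ and $p \le n$. Since $uu$ has length $2n$, every length-$n$ factor is a cyclic permutation of $u$, so $f$ is such a cyclic permutation and is therefore primitive. By the corollary to Theorem \ref{thm:insref} (reflectivity of $Q_{\overline{I}}$), it suffices to prove $f \in Q_{\overline{I}}$. Setting $k+1 = (n+1)/p \ge 2$ and letting $t$ be the prefix of $f$ of length $p$, the period condition forces $f = t^k t'$, where $t'$ is the length-$(p-1)$ prefix of $t$, so $t = t' a$ for $a = t[p]$. If $t$ is primitive, Theorem \ref{thm:cycl} applies directly to conclude $f \in Q_{\overline{I}}$, and then reflectivity transfers the conclusion to $u$.

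The one delicate point, which I expect to be the main (small) obstacle, is that the period $p$ supplied in the statement need not be the primitive period of $f$. When $t$ is not primitive, I would replace $t$ by its primitive root $s$ (so $t = s^m$ with $m \ge 2$); a short length computation then rewrites $f$ as $s^{mk-1} s''$ with $s = s''a$, matching the hypothesis of Theorem \ref{thm:cycl} with $s$ playing the role of the primitive word. All remaining steps are routine bookkeeping about lengths and overlaps inside $uu$.
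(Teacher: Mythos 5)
Your proof is correct and follows essentially the same route as the paper: the forward direction exhibits the cyclic permutation $t^k t'$ as a length-$|u|$ factor of $uu$ whose period $|t|$ divides $|u|+1$, and the converse observes that any length-$|u|$ factor of $uu$ is a cyclic permutation of $u$, then invokes reflectivity of $Q_{\overline{I}}$ together with Theorem \ref{thm:cycl}. The one place you go beyond the paper is the reduction to the primitive root when the given period $p$ is not the least period of the factor --- the paper simply asserts that the periodic factor is generated by a primitive word without comment --- and your treatment of that point is sound apart from a harmless arithmetic slip (the exponent should be $mk+m-1$, not $mk-1$).
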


\begin{proof}
($\Rightarrow$) If $u$ is a non-ins-robust primitive word, then $u$ can be written as $t^r t_1  t_2 t^s$ for some primitive word $t$, $r+s \ge 1$ and $t = t_1 a t_2$ for some symbol $a \in V$ where $t_1 , t_2 \in V^*$.
$uu = t^r t_1 t_2 t^s t^r t_1 t_2 t^s$, This word contains a subword $ t_2 t^s t^r t_1$ of length $|u|$ that is $(t_2 t_1 a)^{r+s} t_2 t_1$ which is a periodic word with period $|t_2 t_1 a| = |t|$ which divides $|u|+1$.\\
($\Leftarrow$) Let $uu$ has a periodic substring of length $|u|$ with period p (divides $|u| +1$ and $p \le |u|$) where u is primitive word. Then $uu = t_1 x^r x' t_2$, where $t_1, t_2 \in V^*$, $|x^{r} x'|=|u|$, $x \in Q$, $r \ge 1$ and $x = x' a$ for some $a \in V$. $|t_1 t_2| = |u|$. Here we have two cases, either $x^r x'$ entirely contained in $u$ or some portion of $x^r x'$ contained in $u$.\\
\textbf{Case 1.} Let  $x^r x'$ entirely in $u$. Then $u$ is not ins-robust as $u = x^r x' $. \\
\textbf{Case 2.} Let some portion of $x^r x'$ contained in $u$. Since $uu = t_1 x^r x' t_2$, and $Z$ is reflective, therefore $t_2 t_1 x^r x' = u' u'$, where $u'$ is cyclic permutation of $u$. Hence, $u' = x^r x'$, is non-ins-robust. Since $Q_I$ is reflective, therefore $u$ is also non-ins-robust.

\end{proof}

\begin{corollary} \label{insr}
Let $u$ be a primitive word. Then $u$ will be \textbf{non-ins-robust} primitive word iff there exists a cyclic permutation of $u$, say $u'$, which is a periodic with period $p$ such that $p$ divides $|u| + 1$ and $p \le |u|$.
\end{corollary}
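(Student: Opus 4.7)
The plan is to derive this corollary directly from Theorem \ref{insrp} by noting that the length-$|u|$ factors of $uu$ are precisely the cyclic permutations of $u$. Write $n = |u|$; then $uu$ has exactly $n+1$ factors of length $n$, namely $uu[i..i+n-1]$ for $i = 1, \ldots, n+1$, and if we split $u = xy$ with $|x| = i-1$, this factor equals $yx$. So as $i$ ranges over $1, \ldots, n+1$, the factor $uu[i..i+n-1]$ ranges over all cyclic permutations of $u$ (the first and last both give $u$ itself).

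For the forward direction, suppose $u$ is non-ins-robust. By Theorem \ref{insrp}, $uu$ contains a factor $v$ of length $n$ that is periodic with some period $p$ satisfying $p \mid n+1$ and $p \le n$. By the observation above, $v$ is a cyclic permutation of $u$, so we may set $u' := v$, which satisfies the required conclusion.

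For the backward direction, suppose some cyclic permutation $u'$ of $u$ is periodic with period $p$ such that $p \mid n+1$ and $p \le n$. Write $u = xy$ so that $u' = yx$. Then $uu = xy\cdot xy$ contains $yx$ as a factor of length $n$ (starting at position $|x|+1$). This factor is periodic with period $p$, matching the hypothesis of Theorem \ref{insrp}, so $u$ is non-ins-robust.

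The main (and essentially only) obstacle is verifying the correspondence between length-$n$ factors of $uu$ and cyclic permutations of $u$; beyond that, the argument is purely a rephrasing. One minor point to mention is that primitivity of $u$ ensures that cyclic permutations $u'$ are also primitive (a standard fact), so the notion of ``periodic with period $p \le |u|$'' is nontrivial in the statement.
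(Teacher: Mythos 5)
Your proposal is correct and follows the same route as the paper, which simply states that the corollary follows from Theorem \ref{insrp}; you have merely made explicit the (standard) correspondence between the length-$|u|$ factors of $uu$ and the cyclic permutations of $u$ that the paper leaves implicit. No gaps.
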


\begin{proof}
The proof follows from Theorem \ref{insrp}.

\end{proof}

Next we present a linear time algorithm to test ins-robustness of a primitive word by using the existing algorithm for finding maximal repetitions in linear time. For more details on the maximal repetition, see \cite{lothaire2005applied}.

\begin{algorithm}[!ht] 
\caption{\textsc{IsInsRobust}}\label{inropr}
\begin{algorithmic}[1]
\Require A finite word $u$
\Ensure ``True'' if $u$ is an ins-robust primitive word, else ``False''
\Statex
\State Let $v \leftarrow uu$.\label{step}
\State $ S \leftarrow $ \Call{FindMaximalRepetitions}{$v$}\label{maxrep1}
\Comment $S$ is a set of pairs of period and length.
\ForAll{$(p_i, l_i) \in S$} \label{st1}
  \If {$|u| \mod p_i = 0$ \textbf{and} $p_i < |u|$} \label{prim} \label{alg:prim} \Comment Testing primitivity.
    \State \Return False \Comment{The word $u$ is not primitive.}
  \EndIf

  \If {$p_i \le |u|$ \textbf{and} $(|u| + 1) \mod p_i = 0$ \textbf{and} $l_i \ge |u|$}\label{alg:main}
    \State \Return False \Comment{The word $u$ is not ins-robust. (Corollary \ref{insr})}
  \EndIf
\EndFor\label{st2}
\State \Return True \Comment{The word $u$ is ins-robust.}
\end{algorithmic}
\end{algorithm}

\begin{theorem}
Let $w$ be a word given as input to Algorithm \ref{inropr}. The algorithm returns true if and only if the word $w$ is ins-robust.
\end{theorem}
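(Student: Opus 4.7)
The biconditional will be established in two directions, with Corollary \ref{insr} as the characterizing bridge: a primitive $u$ is non-ins-robust iff some cyclic permutation of $u$ is periodic with period $p$ such that $p \mid (|u|+1)$ and $p \le |u|$. Since the cyclic permutations of $u$ are exactly the length-$|u|$ factors of $v = uu$, this reduces to checking whether $v$ contains a length-$|u|$ factor with such a period, which the algorithm carries out implicitly by enumerating maximal repetitions of $v$. Analogously, $u$ is non-primitive iff $u$ admits a period $p$ with $p \mid |u|$ and $p < |u|$, which the primitivity test at line \ref{alg:prim} is designed to detect.

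For the direction ``algorithm returns True $\Rightarrow u$ is ins-robust'', I would prove the contrapositive, splitting on which test fires. If line \ref{alg:prim} returns False because of a pair $(p_i, l_i) \in S$ with $p_i \mid |u|$ and $p_i < |u|$, the existence of a maximal repetition of $v$ with period $p_i$ forces $u$ itself to have period $p_i$, so $u = t^{|u|/p_i}$ with $|t| = p_i < |u|$, hence $u \notin Q \supseteq Q_I$. If line \ref{alg:main} fires for some $(p_i, l_i)$ with $l_i \ge |u|$, $p_i \le |u|$ and $p_i \mid (|u|+1)$, restrict the corresponding maximal repetition to any length-$|u|$ window: this yields a length-$|u|$ factor of $v$, and hence a cyclic permutation $u'$ of $u$, with period $p_i$ satisfying the conditions of Corollary \ref{insr}, forcing $u \in Q_{\overline{I}}$.

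For the reverse direction ``$u$ is ins-robust $\Rightarrow$ algorithm returns True'', I would again argue contrapositively: if $u \notin Q_I$, then either $u$ is non-primitive or $u$ is primitive and non-ins-robust. In the first case, writing $u = t^k$ with $t \in Q$ and $k \ge 2$, the word $v = t^{2k}$ witnesses a maximal repetition of period $|t|$ satisfying $|t| \mid |u|$ and $|t| < |u|$, so line \ref{alg:prim} triggers. In the second case, Corollary \ref{insr} supplies a cyclic permutation $u'$ of $u$ periodic with period $p \mid (|u|+1)$ and $p \le |u|$; since $u'$ appears as a length-$|u|$ factor of $v$, extending it maximally to the left and right in $v$ without increasing the period produces a maximal repetition $(p, l)$ with $l \ge |u|$, which triggers line \ref{alg:main}.

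The main obstacle I expect is making the correspondence between length-$|u|$ periodic factors of $v$ and the maximal-repetition set $S$ fully precise. Specifically, one must argue that (i) every length-$|u|$ periodic factor of $v$ lies within a maximal repetition that inherits the same period (so \textsc{FindMaximalRepetitions} is guaranteed to surface the relevant $(p_i, l_i)$), and (ii) conversely, the length condition $l_i \ge |u|$ in line \ref{alg:main} exactly isolates those maximal repetitions that yield, by restriction to a length-$|u|$ window, a cyclic permutation of $u$ with the bad period. Both steps rest on the standard fact that substrings of periodic words inherit the period, together with the maximal-repetition definition from Section \ref{sec:pre} stating that extension changes the period.
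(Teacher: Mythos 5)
Your overall route—reduce correctness to Corollary \ref{insr}, identify the conjugates of $u$ with the length-$|u|$ factors of $v=uu$, and then match periodic factors against the output of \textsc{FindMaximalRepetitions}—is the same route the paper takes, and you correctly isolate the matching step as the crux. But the two claims you make to discharge that step are both false, so the proof does not go through.

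First, in the soundness direction you assert that a maximal repetition of $v$ with period $p_i$ dividing $|u|$ and $p_i<|u|$ ``forces $u$ itself to have period $p_i$.'' It does not: a maximal repetition can be a short local run. Take $u=aabb$, which the paper lists as ins-robust primitive; $v=aabbaabb$ contains the maximal repetition $aa$ with $(p_i,l_i)=(1,2)$, and $1$ divides $4$ with $1<4$, so line \ref{alg:prim} fires even though $u$ is primitive. (This is in fact a defect of the algorithm as printed—the primitivity test is missing a length condition such as $l_i=2|u|$—but your argument papers over it by asserting an implication that is simply not true.) Second, in the completeness direction you claim that a conjugate $u'$ with period $p\mid(|u|+1)$, $p\le|u|$, extends to a maximal repetition of period $p$ and length $\ge|u|$. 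The paper's own definition requires a repetition to have exponent $e\ge 2$, and when $|u|+1=2p$ the conjugate $u'$ has exponent $(2p-1)/p<2$, so it need not lie inside any maximal repetition of period $p$ at all. Concretely, $u=aab\in Q_{\overline{I}}$ (inserting $b$ yields $(ab)^2$) has the periodic conjugate $aba$ with $p=2\mid 4$, but $uu=aabaab$ has no maximal repetition of period $2$; its maximal repetitions are $(3,6)$ and two occurrences of $(1,2)$, and none satisfies the test at line \ref{alg:main}. So line \ref{alg:main} never fires for this non-ins-robust word. These are not artifacts of your phrasing: they mark exactly the points where the correspondence between ``periodic factor of length $|u|$'' (the object in Theorem \ref{insrp}) and ``maximal repetition reported by the subroutine'' breaks down, and the paper's own proof—which merely restates Theorem \ref{insrp} and points at the two \textbf{if} lines—skips over both of them without comment.
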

\begin{proof}
In step (\ref{maxrep1}), the algorithm finds the maximal repetitions with their periods.
Since $Q_{\overline{I}}$ is closed under reflectivity, therefore  $uu$ has all the cyclic permutations of $u$. There is a periodic word $x^r x'$, a permutation of $u$ such that $x=x' a $ for some $a\in V$. Therefore $uu$ also has this periodic word which is proved in Theorem \ref{insrp}. 
That is for a non-ins-robust primitive word $u$, $uu$ contains a periodic word of length at least $|u|$ with a period $p$ such that $p \ divides \ (|u|+1)$ and $p<|u|$. 
This is explained in step (\ref{alg:main}) where $u$ is a primitive word step (\ref{alg:prim}). 

\noindent  Otherwise $u$ is ins-robust primitive word. 
\end{proof}

\begin{theorem}
The property of being ins-robust primitive is testable on a word of length $n$ in $O(n)$ time.
\end{theorem}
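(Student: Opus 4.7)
The plan is to prove the complexity claim by a step-by-step analysis of Algorithm \ref{inropr}, whose correctness is already established by the preceding theorem and Corollary \ref{insr}. Given an input word $w$ of length $n$, I would bound the running time of each line and then sum them.

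First, forming $v \leftarrow uu$ in line \ref{step} requires copying $2n$ symbols and takes $O(n)$ time and space. The call to \textsc{FindMaximalRepetitions}$(v)$ in line \ref{maxrep1} invokes the classical algorithm of Kolpakov and Kucherov (cited via \cite{kolpakov1999finding,kolpakov1999maximal,lothaire2005applied}), which computes all maximal repetitions of a string of length $m$ in time $O(m)$. Applied to $v$ of length $2n$, this step also runs in $O(n)$ time. A crucial companion fact, also due to Kolpakov and Kucherov, is that the number of maximal repetitions of a string of length $m$ is $O(m)$, so $|S| = O(n)$.

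Next I would analyze the loop in lines \ref{st1}--\ref{st2}. Each iteration reads a pair $(p_i, l_i)$ and performs a constant number of integer comparisons and modular reductions (line \ref{alg:prim} and line \ref{alg:main}), all on values bounded by $2n$. Under the standard RAM model these operations are $O(1)$, so the total cost of the loop is $O(|S|) = O(n)$. Summing the costs of lines \ref{step}, \ref{maxrep1}, and the loop yields a total running time of $O(n)$, which establishes the claim.

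The only genuine obstacle is the appeal to linear-time computation of maximal repetitions together with the linear bound on their number; everything else is bookkeeping. Since both facts are classical results already referenced in the paper, no new argument is needed, and correctness is inherited directly from the preceding theorem. I would therefore present the proof as a brief line-by-line timing argument, ending with the observation that the algorithm returns the correct Boolean answer within the stated $O(n)$ budget.
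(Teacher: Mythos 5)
Your proposal is correct and follows essentially the same route as the paper: bound the doubling step, invoke the linear-time maximal-repetitions algorithm on $uu$, and charge constant work per repetition using the linear bound on the number of maximal repetitions. If anything, your accounting is slightly more careful than the paper's (you correctly cost the construction of $v = uu$ as $O(n)$ rather than $O(1)$, and you make explicit the Kolpakov--Kucherov bound $|S| = O(n)$ that the paper only asserts), but the argument is the same.
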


\begin{proof} The Step (\ref{step}) in Algorithm \ref{inropr} running time is an O(1) operation. In step (\ref{maxrep1}) maximal repetition algorithm given in \cite{lothaire2005applied} is used which has linear time complexity. Now from step (\ref{st1}) to step (\ref{st2}), the complexity depends on the cardinality of $S$, which is less than $n$. Hence it also has linear time complexity. Therefore by theorem (\ref{insrp}) testing ins-robustness for primitive word is linear.
\end{proof}

\section{Conclusions and Future Work} \label{sec:con}
We have discussed a special subclass of the language of primitive words which is known as
ins-robust primitive words. We have characterized ins-robust
primitive words and identified several properties and proved that the language
of ins-robust primitive words $Q_I$ is not regular. We also proved that the language of non-ins-robust primitive words $Q_{\overline{I}}$ is not context-free. We identified that $Q_I$ is dense over an alphabet $V$. We
have also presented a linear time algorithm to test if a given word is ins-robust primitive.
Finally, we have given a lower bound on the number of ins-robust primitive words of a given length.

There are several interesting questions that remain unanswered about
ins-robust primitive words. Some of them that we plan to explore in
immediate future are as follows. Is $Q_I ^{i}$ for $i \ge 2$ regular? It is known that $Q^i$ for $i \ge 2$ is regular \cite{doi}. We also
conjecture that the language of ins-robust primitive words $Q_I$ is not a
deterministic context-free language. We believe that the properties we have
identified for ins-robust primitive words will be helpful in answering
these questions.


\bibliography{am_ref}{}
\bibliographystyle{plain}



\end{document}